\newtheorem{theorem}{Theorem}
\newtheorem{proposition}[theorem]{Proposition} 
\newtheorem{corollary}[theorem]{Corollary}
\newtheorem{lemma}[theorem]{Lemma}
\newtheorem{question}[theorem]{Question}
\theoremstyle{definition}
\newtheorem{remark}[theorem]{Remark}
\def\botimes{\mathbin{\bar{\otimes}}}
\begin{document}
\title[Free product pair rigidity]{A free product pair rigidity result\\ in von Neumann algebras}
\author[Y.~Ueda]{Yoshimichi Ueda}
\thanks{{\it Current affiliation and e-mail address}: (since 1st Oct.\ 2017) Graduate School of Mathematics, Nagoya University,
Furocho, Chikusaku, Nagoya, 464-8602, Japan; \url{ueda@math.nagoya-u.ac.jp}}
\address{
Graduate School of Mathematics, 
Kyushu University, 
Fukuoka, 810-8560, Japan
}
\email{ueda@math.kyushu-u.ac.jp}
\date{Sep. 3rd, 2017}
\subjclass[2010]{46L10, 46L54, 46L36}
\keywords{Free product; Type III$_1$ factor; Intertwining technique; Weakly mixing action; Central sequence}
\begin{abstract} 
We prove that the free product pair of finitely many copies of the unique amenable type III$_1$ factor endowed with weakly mixing states remembers the number of free components and the given states.
\end{abstract}
\maketitle

\allowdisplaybreaks{

\section{Introduction} The free product construction for von Neumann algebras is a method of constructing a pair of von Neumann algebra and faithful normal state from given such pairs. Here we are interested in how much information about given pairs can be restored from the resulting pair in the construction. Moreover, we are particularly interested in the case where given von Neumann algebras are {\it amenable} and/or given faithful normal states are \emph{with small centralizer}. This was originally motivated from the following: Firstly, all the available Kurosh-type rigidity results need the non-amenability of given von Neumann algebras; secondly faithful normal states with small centralizer are usually less tractable than ones with large centralizer such as almost periodic states in the study of non-tracial free products (see e.g. \cite{Ueda:MRL11},\cite{Ueda:AmerJMath16}). The existing Kurosh-type rigidity results in von Neumann algebras are: Ozawa's pioneer work \cite{Ozawa:IMRN06} (for given weakly exact, non-prime non-amenable type II$_1$ factors); Ioana--Peterson--Popa's epoch-making work \cite{IoanaPetersonPopa:ActaMath08} (for given type II$_1$ factors possessing regular diffuse von Neumann subalgebras with relative property (T)); Peterson's striking work  \cite{Peterson:InventMath09} (for given non-amenable $L^2$-rigid type II$_1$ factors); Asher's work \cite{Asher:PAMS09} (generalizing Ozawa's to non-tracial states); the latest work \cite{HoudayerUeda:ComposMath16} of Houdayer {\it et al.} (unifying and generalizing the previous results to the setting of arbitrary states).  

\medskip
Let $R_\infty$ be the unique amenable type III$_1$ factor and $\{\varphi_i\}_{i=1}^m$ and $\{\psi_j\}_{j=1}^n$ be finite families of weakly mixing states on $R_\infty$. (See section 3 for the definition of weakly mixing states.) Recall that a von Neumann algebra with weakly mixing state must be either trivial or a type III$_1$ factor. (This is well known in the algebraic quantum field theory, see \cite[Corollary 1.0.8]{Baumgartel:Book}.) Consider two free product pairs $(M,\varphi) := \bigstar_{i=1}^m(R_\infty,\varphi_i)$ and $(N,\psi) := \bigstar_{j=1}^n(R_\infty,\psi_j)$. The resulting $M$ and $N$ are known to be type III$_1$ factors and the centralizers $M_\varphi$ and $N_\psi$ are also known to be trivial (\cite[Lemma 7]{Barnett:PAMS95}; see also the proof of \cite[Proposition 2.1]{Ueda:MRL11}). In what follows, $(M,\varphi) \cong (N,\psi)$ means that there exists a bijective $*$-homomorphism $\pi : M \to N$ such that $\psi = \pi_*(\varphi) := \varphi\circ\pi^{-1}$. We will explicitly write the canonical embedding maps $\lambda^M_i : R_\infty \to M$, $1 \leq i \leq m$, and $\lambda^N_j : R_\infty \to N$, $1 \leq j \leq n$, so that $\varphi\circ\lambda^M_i = \varphi_i$ and $\psi\circ\lambda^N_j = \psi_j$. The main theorem of this paper is the next new rigidity phenomenon (which we call {\it the free product pair rigidity}) for a certain class of type III$_1$ free product factors. It unexpectedly arose as a bonus of considering the case where given states are with `extremely small centralizer'. 
     
\begin{theorem}\label{T1} 
If there exists a bijective $*$-homomorphism $\pi : M \to N$ with $\psi = \pi_*(\varphi)$, then $m=n$ and there exists a unique permutation $\kappa = \kappa_\pi \in \mathfrak{S}_m$ such that for each $1 \leq i \leq m$, $\pi(\lambda^M_i(R_\infty)) = \lambda^N_{\kappa(i)}(R_\infty)$ holds and $\pi_i := (\lambda^N_{\kappa(i)})^{-1}\circ(\pi\!\upharpoonright_{\lambda^M_i(R_\infty)})\circ\lambda^M_i \in \mathrm{Aut}(R_\infty)$ satisfies $\psi_{\kappa(i)} = (\pi_i)_*(\varphi_i)$. Conversely, if $m=n$, then any $\kappa \in \mathfrak{S}_m$ and $\pi_i \in \mathrm{Aut}(R_\infty)$ with $(\pi_i)_*(\varphi_i)=\psi_{\kappa(i)}$, $1 \leq i \leq m$, give rise to a unique bijective $*$-homomorphism $\pi : M \to N$ such that $\pi\circ\lambda^M_i = \lambda^N_{\kappa(i)}\circ\pi_i$ for all $1 \leq i \leq m$ and moreover that $\pi_*(\varphi)=\psi$. 

In particular, $(M,\varphi) \cong (N,\psi)$ if and only if  $m=n$ and $(R_\infty,\varphi_i) \cong (R_\infty,\psi_i)$, $1 \leq i \leq m$, after permutation on the indices. 
\end{theorem}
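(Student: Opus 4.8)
The plan is to prove the two displayed assertions of the theorem and then read off the final ``in particular'' statement as an immediate corollary. I would begin with the easy converse direction, which is essentially bookkeeping with the universal property of the reduced free product. Assuming $m=n$ and given $\kappa\in\mathfrak{S}_m$ together with $\pi_i\in\mathrm{Aut}(R_\infty)$ satisfying $(\pi_i)_*(\varphi_i)=\psi_{\kappa(i)}$, I would form the embeddings $\rho_i:=\lambda^N_{\kappa(i)}\circ\pi_i:R_\infty\to N$ and check that they are state-preserving, since $\psi\circ\rho_i=\psi_{\kappa(i)}\circ\pi_i=\varphi_i=\varphi\circ\lambda^M_i$. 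Because $\kappa$ is a bijection of $\{1,\dots,m\}$, the images $\rho_i(R_\infty)=\lambda^N_{\kappa(i)}(R_\infty)$ run over \emph{all} free components of $(N,\psi)$ and are therefore free with the prescribed states; the uniqueness property of the reduced free product then furnishes a unique $\psi$-preserving $*$-homomorphism $\pi:M\to N$ with $\pi\circ\lambda^M_i=\rho_i$. Surjectivity is automatic as the $\rho_i(R_\infty)$ generate $N$, and $\psi$-preservation against a faithful state forces injectivity, so $\pi$ is the desired isomorphism.

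The forward direction is the heart of the matter, and it is here that the amenability of $R_\infty$ blocks any appeal to the classical Kurosh-type machinery, all of which requires non-amenability or relative property (T) of the components. My approach would be to first prove a matching \emph{up to unitary conjugacy}: that for each $i$ the image $Q_i:=\pi(\lambda^M_i(R_\infty))$ is unitarily conjugate in $N$ to a single free component of $(N,\psi)$. The key input is that $Q_i\cong R_\infty$ is a McDuff factor and hence carries nontrivial centralizing sequences; viewed inside the free product $N$, such a sequence must localize into one free component, because across distinct components free independence together with the weak mixing of the states forces an asymptotically commuting, freely situated sequence to be asymptotically scalar. Feeding this localization into the intertwining technique in the non-tracial setting then shows that $Q_i$ embeds into some $\lambda^N_{\kappa(i)}(R_\infty)$, and weak mixing is used a second time to upgrade ``embeds into a corner'' to ``unitarily conjugate onto the whole component'', since weak mixing controls normalizers and quasi-normalizers and thereby excludes proper corner containments and any diagonal spreading across several components.

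With the up-to-conjugacy matching in hand, I would exploit the \emph{triviality of the centralizer} $N_\psi$ to remove the conjugating unitary and obtain the exact equality $\pi(\lambda^M_i(R_\infty))=\lambda^N_{\kappa(i)}(R_\infty)$ asserted in the theorem. Writing $Q_i=u\,\lambda^N_{\kappa(i)}(R_\infty)\,u^*$ for a unitary $u\in N$, the fact that $\pi$ is $\psi$-preserving means $u$ normalizes the component while respecting $\psi$; weak mixing forces such a normalizing unitary to lie in $\lambda^N_{\kappa(i)}(R_\infty)$ modulo the trivial centralizer, so $u$ may be absorbed and the matching holds on the nose. That $\kappa$ is a well-defined permutation then follows because the components of $M$ are sent bijectively onto those of $N$ (forcing $m=n$), and the induced maps $\pi_i:=(\lambda^N_{\kappa(i)})^{-1}\circ(\pi\!\upharpoonright_{\lambda^M_i(R_\infty)})\circ\lambda^M_i$ are automatically state-preserving automorphisms of $R_\infty$ with $(\pi_i)_*(\varphi_i)=\psi_{\kappa(i)}$. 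Finally the ``in particular'' statement drops out: an isomorphism $(M,\varphi)\cong(N,\psi)$ yields, through the $\pi_i$, isomorphisms $(R_\infty,\varphi_i)\cong(R_\infty,\psi_{\kappa(i)})$ after the permutation $\kappa$; conversely such componentwise isomorphisms supply $\kappa$ and the $\pi_i$ that feed into the converse direction and rebuild $(M,\varphi)\cong(N,\psi)$.

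I expect the main obstacle to be the first half of the forward direction, namely the localization of each $Q_i$ into a single free component, precisely because amenability voids the usual deformation/rigidity inputs; the entire burden thus falls on combining the McDuff centralizing sequences of $R_\infty$ with the weak-mixing rigidity of the states, and making the central-sequence localization compatible with the intertwining technique in the type III$_1$, non-tracial setting is where the real work lies.
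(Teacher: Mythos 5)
Your overall strategy --- localize each component via central sequences and weak mixing, intertwine it into a component of the other free product, then upgrade to exact equality --- matches the paper's, and your converse direction is essentially the paper's argument verbatim. But the forward direction as you describe it has genuine gaps at precisely the points you yourself flag as ``the real work''. First, the intertwining step: you invoke ``the intertwining technique in the non-tracial setting'' as a black box, but no off-the-shelf version applies here. The known non-tracial criteria (e.g.\ Houdayer--Isono) require the subalgebra being embedded to be \emph{finite}, whereas $Q_i\cong R_\infty$ is a type III$_1$ factor. The paper's Section 2 exists exactly to fill this hole: Proposition \ref{P3} is a new variant of Popa's criterion in which finiteness of $A$ is replaced by \emph{amenability} of $A$ (a fixed point in a $\sigma$-weakly compact convex subset of the basic extension substitutes for the minimal distance theorem). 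Without something of this kind your step ``feeding the localization into the intertwining technique'' does not go through. Second, the localization itself is not a soft consequence of ``free independence plus weak mixing''. The paper's Lemma \ref{L6} needs the bounded projection $\Phi$ of \eqref{Eq1} onto the span of letters (the stated novelty of the paper) together with an $\varepsilon$-orthogonality estimate over $2^K$ group elements, and the logic runs in the contrapositive: assuming $Q\not\preceq_M M_i$ for all $i$ one produces unitaries $u_j\in Q$ with $\Phi(u_j)\to 0$, deduces that the $\alpha^\omega$-invariant relative commutant lands in $M_1^\omega+\cdots+M_m^\omega$ and then in a single $M_{i_0}^\omega$, and concludes $Q\subseteq M_{i_0}$, a contradiction.

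Third, your mechanism for upgrading ``embeds into a corner'' to ``conjugate onto the whole component'' is not the right one: weak mixing by itself does not exclude proper corner containments. The paper gets equality by a two-sided ping-pong: it produces maps $\kappa_N:[n]\to[m]$ and $\kappa_M:[m]\to[n]$ from intertwining in both directions, composes the resulting conjugations to get $v\,M_i\,v^*\subseteq M_{\kappa_N(\kappa_M(i))}$, applies \cite[Lemma 2.8]{HoudayerUeda:ComposMath16} to conclude $\kappa_N\circ\kappa_M=\mathrm{id}$ (whence $m=n$), and only then reads off equality of the conjugated algebras. Your assertion that the components are ``sent bijectively'' is exactly what this argument establishes; it is not automatic from one-sided embeddings. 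Finally, the removal of the conjugating unitary is done by Lemma \ref{L8}: since $uM_iu^*$ is globally invariant under the modular free product action, $\alpha_g(u)^*u\in M_i$ for all $g$, and weak mixing on the subspaces $\mathfrak{Y}$ kills $u^*-E_{M_i}(u^*)$. The triviality of the centralizer $N_\psi$, which you lean on for this step, is not the operative ingredient. In short: right architecture, but the three load-bearing pieces (Proposition \ref{P3}, Lemmas \ref{L6}--\ref{L7}, Lemma \ref{L8}) are each replaced by an assertion rather than an argument, and one of them is attributed to a mechanism that would not suffice.
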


The next corollary is immediate from the above theorem. 

\begin{corollary}\label{C2} If all the $\varphi_i$ are identical to a fixed weakly mixing state $\varphi_0$, then the $\varphi$-preserving automorphism group $\mathrm{Aut}_\varphi(M)$ is isomorphic to the wreath product $\mathfrak{S}_m \ltimes \mathrm{Aut}_{\varphi_0}(R_\infty)^m$ by $\pi \mapsto (\kappa_\pi,(\pi_i))$ with notation in Theorem \ref{T1}, where $\kappa \in \mathfrak{S}_m$ acts on $\mathrm{Aut}_{\varphi_0}(R_\infty)^m$ by $(\pi_i)^\kappa := (\pi_{\kappa(i)})$.
\end{corollary}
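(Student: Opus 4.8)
The plan is to read the asserted group isomorphism directly off Theorem \ref{T1}, applied in the special case $(N,\psi) = (M,\varphi)$; here $n = m$, every $\psi_j$ equals $\varphi_0$, and a bijective $*$-homomorphism $\pi \colon M \to M$ with $\pi_*(\varphi) = \varphi$ is precisely an element of $\mathrm{Aut}_\varphi(M)$. The forward direction of Theorem \ref{T1} attaches to each $\pi \in \mathrm{Aut}_\varphi(M)$ a unique $\kappa_\pi \in \mathfrak{S}_m$ and automorphisms $\pi_i \in \mathrm{Aut}(R_\infty)$, and the constraint $(\pi_i)_*(\varphi_0) = \psi_{\kappa_\pi(i)} = \varphi_0$ places each $\pi_i$ in $\mathrm{Aut}_{\varphi_0}(R_\infty)$. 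Thus the first step is to record that $\Phi \colon \pi \mapsto (\kappa_\pi,(\pi_i)_{i=1}^m)$ is a well-defined map $\mathrm{Aut}_\varphi(M) \to \mathfrak{S}_m \ltimes \mathrm{Aut}_{\varphi_0}(R_\infty)^m$.

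Next I would obtain bijectivity of $\Phi$ from the converse direction of Theorem \ref{T1}. Surjectivity is immediate: given any $(\kappa,(\pi_i))$ with $\pi_i \in \mathrm{Aut}_{\varphi_0}(R_\infty)$, the converse produces $\pi \in \mathrm{Aut}_\varphi(M)$ with $\pi\circ\lambda^M_i = \lambda^M_{\kappa(i)}\circ\pi_i$, whence $\Phi(\pi) = (\kappa,(\pi_i))$ after unwinding the defining formula $\pi_i = (\lambda^M_{\kappa(i)})^{-1}\circ(\pi\!\upharpoonright_{\lambda^M_i(R_\infty)})\circ\lambda^M_i$. For injectivity, if $\Phi(\pi) = \Phi(\pi')$ then $\pi$ and $\pi'$ satisfy the same intertwining relations, so the uniqueness clause of the converse forces $\pi = \pi'$.

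The one computational step is the homomorphism property for the stated wreath-product law. Writing $\kappa = \kappa_\pi$ and $\sigma = \kappa_\rho$ and using $\rho\circ\lambda^M_i = \lambda^M_{\sigma(i)}\circ\rho_i$ together with $\pi\circ\lambda^M_j = \lambda^M_{\kappa(j)}\circ\pi_j$, I would compute on each free component
\[
(\pi\circ\rho)\circ\lambda^M_i \;=\; \pi\circ\lambda^M_{\sigma(i)}\circ\rho_i \;=\; \lambda^M_{(\kappa\sigma)(i)}\circ(\pi_{\sigma(i)}\circ\rho_i).
\]
The uniqueness of the decomposition in Theorem \ref{T1} then gives $\kappa_{\pi\circ\rho} = \kappa\sigma$ and $(\pi\circ\rho)_i = \pi_{\sigma(i)}\circ\rho_i$. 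Comparing with the semidirect-product multiplication associated to the right action $(\pi_i)^\sigma := (\pi_{\sigma(i)})$, i.e. $(\kappa,(\pi_i))(\sigma,(\rho_i)) = (\kappa\sigma,(\pi_{\sigma(i)}\circ\rho_i)_i)$, this is exactly $\Phi(\pi\circ\rho) = \Phi(\pi)\,\Phi(\rho)$.

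The only point requiring care is bookkeeping rather than analysis: one must match the composition order $(\pi\circ\rho)_i = \pi_{\sigma(i)}\circ\rho_i$ to the correct semidirect-product convention so that the prescribed action $(\pi_i)^\kappa := (\pi_{\kappa(i)})$ yields precisely this law, and confirm (a routine associativity check for a right action) that this law does define the wreath product $\mathfrak{S}_m \ltimes \mathrm{Aut}_{\varphi_0}(R_\infty)^m$.
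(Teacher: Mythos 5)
Your proposal is correct and takes exactly the paper's route: the paper derives Corollary \ref{C2} as an immediate formal consequence of Theorem \ref{T1}, which is precisely what you spell out (well-definedness of $\pi \mapsto (\kappa_\pi,(\pi_i))$ from the forward direction, bijectivity from the converse together with its uniqueness clause, and the homomorphism property from uniqueness of the decomposition). Your verification that the composition law $(\pi\circ\rho)_i = \pi_{\kappa_\rho(i)}\circ\rho_i$ matches the semidirect product built from the right action $(\pi_i)^\kappa := (\pi_{\kappa(i)})$ is the correct bookkeeping.
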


The same assertion of Theorem \ref{T1} still holds even when some copies of $R_\infty$ in the free products $(M,\varphi)$, $(N,\psi)$ are replaced with anti-freely indecomposable non-amenable type III$_1$ factors (in the sense of \cite{HoudayerUeda:ComposMath16}) endowed with weakly mixing states. For such a generalization, we only need to use the intermediate assertion ($\diamondsuit$) in the proof of \cite[Main Theorem]{HoudayerUeda:ComposMath16} instead of Lemma \ref{L7} (but Lemma \ref{L8} is necessary) when dealing with anti-freely indecomposable non-amenable type III$_1$ factors. 

\medskip
We briefly mention some crucial ideas appearing in the proof of Theorem \ref{T1}. Although Theorem \ref{T1} is a kind of Kurosh-type rigidity result, any existing Kurosh-type rigidity results cannot be used to prove Theorem \ref{T1}, since they need, among others, the non-amenability of given von Neumann algebras. Instead, we use a simple analysis of central sequences in the presence of weak mixing property (see Lemma \ref{L6}). This type of analysis, whose prototype appears in an old work of Popa \cite{Popa:AdvMath83} on type II$_1$ factors, was used by Houdayer--Raum \cite{HoudayerRaum:MathAnn15} and Boutonnet--Houdayer \cite{BoutonnetHoudayer:APDE16} in combining with the weak mixing property for some questions on free Araki--Woods factors. In this respect, the novelty of this work is the use of a bounded projection onto the span of `letters' (see equation \eqref{Eq1} and Lemma \ref{L5}) in such an analysis. Despite this difference between the existing Kurosh-type rigidity results and Theorem \ref{T1}, our essential strategy of proving Theorem \ref{T1} still follows the fundamental principle of \emph{Popa's deformation/rigidity theory} (see \cite{Popa:ICM06}). Namely, we will use Popa's intertwining technique under `tensions' between \emph{rigidity} and \emph{malleability}. In the proof of Theorem \ref{T1}, the required \emph{rigidity} comes from the weak mixing property of the modular actions of given states and the required \emph{malleability} does from the amenability of given von Neumann algebras.  Finally, we point out that the key observations, Lemmas \ref{L7} and \ref{L8}, are applicable to any free group factors of finite rank (see section 4). However, it is unclear whether or not our observations, especially Lemma \ref{L7}, will be able to give any contributions to the (non-)isomorphism problem. 

\medskip
Necessary backgrounds on free products with respect to arbitrary faithful normal states can be found in \cite[section 2]{Ueda:AdvMath11}, and the other necessary facts will be referred to suitable references at appropriate places. Unlike \cite[section 2]{Ueda:AdvMath11} we will use the standard form $(M,L^2(M),J^M,\mathfrak{P}^M)$ of a given von Neumann algebra $M$ (see e.g. \cite[Chapter IX, section 1]{Takesaki:Book2}) instead of a GNS representation. It is well known that any positive $\varphi \in M_*$ has a unique implementing vector $\xi_\varphi \in \mathfrak{P}^M$, i.e., $(x\xi_\varphi\,|\,\xi_\varphi)_{L^2(M)} = \varphi(x)$ for $x \in M$ and moreover that $\xi_\varphi$ becomes cyclic and separating if and only if $\varphi$ is faithful. We also use the notation concerning ultraproduct von Neumann algebras such as $(x_n)^\omega$ in \cite{AndoHaagerup:JFA14}, which is a bit different from that in \cite[section 2]{Ueda:AdvMath11}. In what follows, we say that a von Neumann subalgebra is {\it with expectation}, if there exists a faithful normal conditional expectation from the ambient von Neumann algebra onto it.  

\section{Yet another variant of Popa's criterion} 

In \cite[Appendix]{Popa:AnnMath06},\cite[section 2]{Popa:InventMath06} Popa invented quite a powerful criterion for deciding whether or not a von Neumann subalgebra $A$ of a finite von Neumann algebra $M$ can be conjugated into another $B$ of $M$ by a partial isometry in $M$. His criterion is the most key ingredient of the intertwining technique, which has played an important r\^{o}le in many works since then. To prove Theorem \ref{T1}, we also need to use Popa's criterion in a situation where $M$ is a type III$_1$ factor and $A, B$ are amenable type III$_1$ subfactors with expectation. However, the previously known `non-tracial' variants of Popa's criterion cannot be used in such a situation. Hence we prepare yet another variant of Popa's criterion for proving Theorem \ref{T1}.

\medskip
Let $M$ be a $\sigma$-finite von Neumann algebra and $A, B \subseteq M$ be two (not necessarily unital) von Neumann subalgebras with a faithful normal conditional expectation $E_B : 1_B M 1_B \to B$, where $1_B$ denotes the unit of $B$. The proposition below was observed in the fall of 2014 and triggered by a discussion with Yusuke Isono about an unpublished attempt due to Deprez and Raum, which also triggered Houdayer--Isono's thoroughgoing work \cite[section 4]{HoudayerIsono:AdvMath17} on Popa's criterion in the not necessarily finite von Neumann algebra setting. Although Houdayer--Isono's variant of Popa's criterion seems more useful than our variant in general, it requires $A$ to be finite and hence cannot be used to prove Theorem \ref{T1}. Our main technical contribution is only the use of amenability in place of the so-called minimal distance theorem in Hilbert spaces, but we do give the complete proof of our variant, since it is certainly necessary for proving Theorem \ref{T1}. 

\begin{proposition}\label{P3} Assume that $A$ is amenable. Then the following are equivalent{\rm:}
\begin{itemize}
\item[(1)] There exists no net $u_i$ of unitaries in $A$ satisfies that $\lim_i E_B(y^* u_i x) = 0$ strongly for all $x,y \in 1_A M 1_B$.
\item[(2)] There exist a natural number $n$, a non-zero partial isometry $v \in \mathbb{M}_{n,1}(M)$ and a normal {\rm(}possibly non-unital{\rm)}  $*$-homomorphism $\theta : A \to \mathbb{M}_n(B)$ such that $va = \theta(a)v$ for every $a \in A$. 
\end{itemize}
\end{proposition}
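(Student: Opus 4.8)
The plan is to establish the two implications separately, the substance lying in $(1)\Rightarrow(2)$, where the amenability of $A$ is used. For $(2)\Rightarrow(1)$ I would argue by contraposition: suppose a net $u_i$ of unitaries in $A$ satisfies $E_B(y^*u_ix)\to0$ strongly for all $x,y\in 1_AM1_B$, and suppose an intertwiner $(n,v,\theta)$ as in (2) exists; the goal is to force $v=0$. Putting $a=1_A$ in $va=\theta(a)v$ gives $v=\theta(1)v=v1_A$, so writing $v=(v_1,\dots,v_n)^{\mathrm t}$ each $v_k$ lies in $1_BM1_A$, each $v_k^*$ in $1_AM1_B$, and $v_ku_i=\sum_l\theta_{kl}(u_i)v_l$. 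The naive route—take a trace and cyclically permute factors, as in the tracial case—is unavailable, so instead I would pass to the standard form $L^2(M)$ and use the Jones projection $e_B$ of $B\subseteq 1_BM1_B$ (which linearizes $E_B$) to read the relation as the invariance under $\mathrm{Ad}\,\theta(u_i)$ of the projection $q=vv^*\in\mathbb M_n(1_BM1_B)$. The hypothesis then exhibits the finite canonical trace of $q$ as a limit of terms each dominated by $\|E_B(v_k^*u_iv_l)\xi_\psi\|$, hence as $0$, so $q=0$ and $v=0$. The only care needed is to evaluate everything against one fixed faithful normal state and to use $\|\theta(u_i)\|\le1$, so that strong convergence of $E_B(\cdot)$ suffices and no cyclicity is invoked; this is the routine direction.

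For $(1)\Rightarrow(2)$ I would first reformulate. Fix a faithful normal state $\psi$ on $B$ with cyclic separating vector $\xi_\psi$. Since $E_B$ is $B$-bimodular, any multiplier $c\in B$ is absorbed into $x$, so that ``$E_B(y^*u_ix)\to0$ strongly for all $x,y$'' is equivalent to ``$\|E_B(y^*u_ix)\xi_\psi\|\to0$ for all $x,y$''. A routine directed-set argument then converts the failure of the existence of such a net into a \emph{uniform lower bound}: there are finitely many $a_1,\dots,a_k\in 1_AM1_B$ and $\delta>0$ with $\sum_{p,q=1}^k\|E_B(a_q^*ua_p)\xi_\psi\|^2\ge\delta$ for every unitary $u\in A$.

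I would then read the $a_p$ as vectors in the right Hilbert $B$-module $\mathcal H$ obtained by completing $1_AM1_B$ in $\langle x,y\rangle_B=E_B(x^*y)$, on which $A$ acts by left multiplication through adjointable operators; an intertwiner as in (2) is then exactly a nonzero $A$-invariant finitely generated projective $B$-submodule of $\mathcal H$, equivalently a nonzero $A$-central projection of finite $B$-rank in the basic construction $\langle 1_BM1_B,e_B\rangle$. Setting $T=\sum_p\theta_{a_p,a_p}$, a positive ``finite-rank-over-$B$'' element there, the uniform lower bound says precisely that the orbit $\{uTu^*:u\in\mathcal U(A)\}$ stays non-negligible: a fixed $\psi$-pairing of $uTu^*$ with $T$ is $\ge\delta$ for all $u$. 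In the tracial case one would take the unique minimal-$L^2$-norm element in the weak closure of the convex hull of this orbit, which is automatically $A$-central; here there is no trace, so instead I would invoke amenability of $A$ to produce an $\mathrm{Ad}\,\mathcal U(A)$-invariant mean and average the bounded orbit against it, obtaining as a weak$^*$-limit in the predual pairing of $\langle 1_BM1_B,e_B\rangle$ a nonzero $A$-central positive element $T_0$ of finite canonical trace. A spectral projection of $T_0$ is then a nonzero $A$-central finite-$B$-rank projection, from which $v$ and $\theta:A\to\mathbb M_n(B)$ are extracted by the standard bookkeeping of the basic construction.

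The main obstacle is exactly this averaging step. In the absence of a trace on $M$ one must guarantee that the mean-average $T_0$ \emph{both} remains in the semifinite (finite-trace) part of $\langle 1_BM1_B,e_B\rangle$ \emph{and} is nonzero; preventing the averaged element from escaping to infinity is what the minimal-distance theorem provides for free in the tracial setting, and supplying it via amenability—choosing the correct weak$^*$-compact convex set of candidates and checking that the invariant mean lands inside it—is the heart of the argument.
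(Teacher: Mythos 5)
Your overall strategy is the same as the paper's: reformulate (1) as a uniform lower bound $\sum_{p,q}\Vert E_B(a_q^*ua_p)\xi_\psi\Vert^2\geq\delta$ over all unitaries $u\in A$, form $d=\sum_p a_pe_Ba_p^*$ in the basic construction, use amenability of $A$ (in place of the minimal-distance theorem) to produce a non-zero fixed point under $\mathrm{Ad}\,\mathcal{U}(A)$ in the $\sigma$-weakly closed convex hull of the orbit of $d$, take a spectral projection, and push down. There are, however, two genuine gaps. The first is structural: your bookkeeping is phrased entirely in terms of a ``canonical trace'' on $\langle 1_BM1_B,e_B\rangle$ and of ``finite-$B$-rank'' projections. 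No such trace exists unless $B$ is semifinite, and in the very application of Proposition \ref{P3} in this paper $B$ is an amenable type III$_1$ factor. The paper's proof splits $B$ by the maximal central projection $z$ with $Bz$ finite: on the corner ${}^tz\,\langle M,B^\sim\rangle$ it transfers $\mathrm{Tr}_{B(\ell^2)}\botimes(\varphi\!\upharpoonright_{Bz})$ and argues as you propose, while on the properly infinite corner no trace is available or needed, since there ${}^t(1_B-z)\,e_{B^\sim}$ is itself properly infinite and the relevant subprojection of the fixed point is automatically subequivalent to it. Relatedly, ``finiteness over $B$'' must also be recorded by the condition $\widehat{(E_B)^\sim}(c_0)\in M$ for the dual operator valued weight; this is what later licenses the push-down lemma, and your closing ``standard bookkeeping of the basic construction'' is a tracial-case shortcut --- in this generality one must apply $\widehat{(E_B)^\sim}\otimes\mathrm{Id}_n$ and the push-down lemma to move the intertwiner from $\mathbb{M}_n(\langle M,B^\sim\rangle)$ into $\mathbb{M}_n(M)$ before taking a polar decomposition.

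The second gap is that you explicitly name the danger that the averaged element vanishes or escapes to infinity as ``the heart of the argument'' but do not resolve it. The resolution is short and should be supplied: non-vanishing of the fixed point $c_0$ follows because $c\mapsto\sum_p(c\,a_p\xi\,|\,a_p\xi)$ is a normal positive functional, so the uniform lower bound $\geq\delta$ on the orbit passes to the $\sigma$-weakly closed convex hull; and $\mathrm{Tr}({}^tz\,c_0)\leq\mathrm{Tr}({}^tz\,d)<+\infty$ together with $\widehat{(E_B)^\sim}(c_0)\in M$ follow by lower semicontinuity (normality) of $\mathrm{Tr}$ and of the dual weight, since both are finite on $d$ and hence uniformly bounded on the orbit. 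One further caution: obtaining the fixed point from ``an $\mathrm{Ad}\,\mathcal{U}(A)$-invariant mean'' is not literally available, as $\mathcal{U}(A)$ need not be amenable as a discrete group; the correct mechanism is hyperfiniteness of $A$ (averaging over unitary groups of finite-dimensional subalgebras as in Izumi--Longo--Popa, supplemented by Elliott's theorem when $A_*$ is not separable). Your direction $(2)\Rightarrow(1)$ is the routine one and is handled in the paper by citation.
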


Following Popa's notation, we write $A \preceq_M B$ and say that $A$ embeds into $B$ inside $M$, if the above equivalent conditions hold. 

\begin{proof} 
It suffices to show that item (1) implies item (2), for the opposite direction is shown in the usual way (see \cite[Proposition C.1 (1) $\Rightarrow$ (4)]{Vaes:Asterisque07}). 

Let $z$ be the maximal central projection in $B$ so that $Bz$ is finite; hence $B(1_B-z)$ is properly infinite. Choose and fix a faithful normal state $\varphi$ on $B$ in such a way that $\varphi\!\upharpoonright_{Bz}$ is tracial. Remark that item (1) is equivalent to that there exist a finite subset $\mathcal{F} \Subset 1_A M1_B$ and $\varepsilon > 0$ so that $\sum_{x,y \in \mathcal{F}} \Vert E_B(y^* u x)\xi_\varphi\Vert_{L^2(B)}^2 \geq \varepsilon$ for all unitaries $u \in A$.

Consider the unitalization $B^\sim := B + \mathbb{C}1_B^\perp$ inside $M$, and we can extend $E_B$ and $\varphi$ to a faithful normal conditional expectation $(E_B)^\sim : M \to B^\sim$ and a faithful normal positive linear functional $\varphi^\sim$ on $B^\sim$, respectively. 

Applying the well-known representation theory of von Neumann algebras to the restriction of the right action of $M$ on $L^2(M)$ to $B^\sim$, we can identify the basic extension $\langle M,B^\sim\rangle$ (by $(E_B)^\sim$) with a certain reduced algebra $P(B(\ell^2)\botimes B^\sim)P$ with projection $P \in B(\ell^2)\botimes B^\sim$ in such a way that $e_{11}\otimes 1 \leq P$ and that the identification sends ${}^t r := J^M r J^M$ with projection $r \in \mathcal{Z}(B^\sim)$, the Jones projection $e_{B^\sim}$ and $b^\sim e_{B^\sim}$ with $b^\sim \in B^\sim$ to $P(1_{B(\ell^2)}\otimes r) = (1_{B(\ell^2)}\otimes r)P$, $e_{11}\otimes 1$ and $e_{11}\otimes b^\sim$, respectively, where $e_{ij}$ are canonical matrix units in $B(\ell^2)$. See the proof of \cite[Proposition 3.1 (i) $\Rightarrow$ (ii)]{Ueda:JLMS13}. In particular, we have ${}^t r e_{B^\sim} = re_{B^\sim}$ ($= e_{11}\otimes r$ in $P(B(\ell^2)\botimes B^\sim)P$). Hence we can construct a faithful normal semifinite trace $\mathrm{Tr}$ on ${}^t z\,\langle M,B^\sim\rangle$ by transferring $\mathrm{Tr}_{B(\ell_2)}\botimes(\varphi\!\upharpoonright_{Bz})$ on $B(\ell_2)\botimes Bz$ to ${}^t z\,\langle M,B^\sim\rangle$. Remark that $\mathrm{Tr}$ is characterized by $\mathrm{Tr}(b e_{B^\sim}) = \varphi(b)$ for $b \in Bz$, because $be_{B^\sim} = bze_{B^\sim} = b\,{}^t z\,e_{B^\sim}$ in ${}^t z\,\langle M,B^\sim\rangle$ and this element corresponds to $e_{11}\otimes b$ in $B(\ell^2)\botimes Bz$. 

Let $\widehat{(E_B)^\sim} : \langle M,B \rangle \to M$ be the dual operator valued weight (see e.g. \cite[subsection 2.1]{IzumiLongoPopa:JFA98}). Set $d := \sum_{y \in \mathcal{F}} y e_{B^\sim} y^* \in 1_A\,{}^t 1_B\,\langle M, B^\sim\rangle1_A\,{}^t 1_B$, where $1_A$ denotes the unit of $A$. Then $\widehat{(E_B)^\sim}(d) = \sum_{y \in \mathcal{F}} yy^*$. We have
\begin{align*}
\mathrm{Tr}({}^t z d) 
= 
\sum_{y \in \mathcal{F}} \mathrm{Tr}(({}^t z\,y e_{B^\sim})({}^t z\,y e_{B^\sim})^*) 
= 
\sum_{y \in \mathcal{F}} \mathrm{Tr}({}^t z\, e_{B^\sim} y^* y e_{B^\sim}) 
= \sum_{y \in \mathcal{F}}\varphi(zE_B(y^* y)) < +\infty. 
\end{align*}

Set $\mathcal{C} := \overline{\mathrm{co}}^\text{$\sigma$-w}\{ u^* d u \mid \text{unitary $u \in A$}\}$ inside $1_A\,{}^t 1_B\,\langle M, B^\sim\rangle1_A\,{}^t 1_B$. \emph{Since $\mathcal{C}$ is a $\sigma$-weakly compact convex subset and since $A$ is amenable, there exists a fixed point $c_0 \in \mathcal{C}$ under the adjoint action of the unitary group of $A$.} This is immediate when the predual $A_*$ is separable; see e.g.\ the proof of \cite[Theorem 3.9]{IzumiLongoPopa:JFA98}. The general case where $A_*$ is not separable needs \cite[Theorem 4]{Elliott:CanadMathBull78} in addition. In fact, it implies that $A$ is generated by an upward directed collection of hyperfinite (in the classical sense, see \cite[\S1]{Elliott:CanadMathBull78}) von Neumann subalgebras $A_j$, $j \in J$, to each of which the proof of \cite[Theorem 3.9]{IzumiLongoPopa:JFA98} is applicable as above. Namely, the subset $\mathcal{C}_j$ of fixed-points in $\mathcal{C}$ under the adjoint action of the unitary group of each $A_j$ is not empty. Since the collection $A_j$, $j \in J$, is upward directed, it is easy to confirm that the intersection of any finite sub-collection of the collection $\mathcal{C}_j$, $j \in J$, contains some $\mathcal{C}_j$ and thus is not empty. Hence the intersection of all the $\mathcal{C}_j$, $j\in J$, is a non-empty set, and a desired element $c_0$ lives there. 

Using the equivalent condition of item (1) given in the second paragraph, we have 
$$
\sum_{x \in \mathcal{F}}((u^* d u) x\xi_{\varphi^\sim\circ(E_B)^\sim}\,|\,x\xi_{\varphi^\sim\circ(E_B)^\sim})_{L^2(M)} = \sum_{x,y\in\mathcal{F}}\Vert E_B(y^* u x)\xi_\varphi\Vert_{L^2(B)}^2 \geq \varepsilon
$$ 
for every unitary $u \in A$, and hence $\sum_{x \in \mathcal{F}}(c x\xi_{\varphi^\sim\circ(E_B)^\sim}\,|\,x\xi_{\varphi^\sim\circ (E_B)^\sim})_{L^2(M)} \geq \varepsilon$ for all $c \in \mathcal{C}$. In particular, $c_0$ is non-zero. Since $\widehat{(E_B)^\sim}(d)\in M$, we have $\chi\circ\widehat{(E_B)^\sim}(c_0) \leq \Vert \widehat{(E_B)^\sim}(d)\Vert_\infty \Vert \chi\Vert$ for every positive $\chi \in M_*$ by lower semicontinuity, and thus it is plain to see, by using its generalized spectral decomposition (see \cite[Theorem 1.5]{Haagerup:JFA79-1}), that $\widehat{(E_B)^\sim}(c_0)$ falls in $M$. Similarly, by lower semicontinuity we have $\mathrm{Tr}({}^t z c_0) \leq \mathrm{Tr}({}^t z d) < +\infty$. Taking a suitable spectral projection of $c_0$ we can find a non-zero projection $f \in A' \cap 1_A\,{}^t 1_B\,\langle M, B^\sim\rangle1_A\,{}^t 1_B$ in such a way that $\widehat{(E_B)^\sim}(f) \in M$ and $\mathrm{Tr}(p) < \infty$ with $p:={}^t z f \in {}^t z \langle M,B^\sim\rangle$. 

\emph{With $\langle M,B^\sim\rangle = P(B(\ell^2)\botimes B^\sim)P$, the projection ${}^t(1_B-z)\,e_B$ is nothing but $e_{11}\otimes(1_B-z)$ being properly infinite, since $B(1_B-z)$ is properly infinite; hence $q := {}^t(1_B-z)\,f = f-p$ is subequivalent to ${}^t(1_B-z)\,e_{B^\sim}$ inside $\langle M,B^\sim\rangle$ by \cite[Theorem 6.3.4]{KadisonRingrose:Book2}.} By the same reason as in the beginning of the proof of \cite[Proposition F.10]{BrownOzawa:Book} or by \cite[Lemma A.1]{Vaes:Asterisque07} we may and do assume, with cutting $p$ by a central projection if necessary, that $p$ is subequivalent to $\mathrm{diag}({}^t z\,e_{B^\sim},\dots,{}^t z\,e_{B^\sim})$ inside the $n$-amplification $\mathbb{M}_n({}^t z \langle M,B^\sim\rangle)$ for some finite $n$. Consequently, there exists a partial isometry $V \in \mathbb{M}_n(\langle M,B^\sim\rangle)$ so that $V^* V = \mathrm{diag}(f,0,\dots,0)$ and $VV^* \leq e_B^{(n)} := \mathrm{diag}(e_B,\dots,e_B)$ with $e_B := {}^t 1_B e_{B^\sim} = 1_B e_{B^\sim}$ as before. Define a normal $*$-homomorphism $\theta : A \to \mathbb{M}_n(B)$ is defined by 
$$
a \in A \mapsto V\mathrm{diag}(af,0,\dots,0)V^* \in  e_B^{(n)}\mathbb{M}_n(\langle M,B^\sim\rangle)e_B^{(n)} = \mathbb{M}_n(B)e_B^{(n)} \cong \mathbb{M}_n(B).
$$
It follows that $V\mathrm{diag}(a,0,\dots,0) = \theta(a)V$ for every $a \in A$. Apply $\widehat{(E_B)^\sim}\otimes\mathrm{Id}_n$ to this equation ({\it n.b.}, we can do since $\widehat{(E_B)^\sim}(f) \in M$), and the push-down lemma \cite[Proposition 2.2]{IzumiLongoPopa:JFA98} (which clearly holds without the factoriality assumption on given algebras) with the help of (the proof of) \cite[Lemma 4.5]{Haagerup:JFA79-1} shows that there exists $w \in \mathbb{M}_n(M)$ such that $w^*w \leq \mathrm{diag}(1,0,\dots,0)$ and $w\,\mathrm{diag}(a,0,\dots,0) = \theta(a)\,w$ for every $a \in A$. Taking the polar decomposition of $w$ we get a desired element $v$.      
\end{proof}

\begin{remark}\label{R4} Proposition \ref{P3} can be a bit strengthened, as in Houdayer and Isono's work \cite{HoudayerIsono:AdvMath17}, in the  following way: If it is further assumed that there exists a faithful normal conditional expectation $E_A : 1_A M 1_A \to A$, then we can make the inclusion $\theta(A) \subseteq \theta(1_A)\mathbb{M}_n(B)\theta(1_A)$ with expectation.
\end{remark} 
\begin{proof} 
Let $f$ be as in the above proof; namely, $f$ is a non-zero projection in $1_A{}^t 1_B\,\langle M,B^\sim\rangle 1_A {}^t 1_B$ such that $\mathrm{diag}(f,0\dots0) \precsim e_B^{(n)}$ inside $\mathbb{M}_n(\langle M,B^\sim\rangle)$ and $\widehat{(E_B)^\sim}(f) \in M$. Set $\Phi := (E_A)^\sim\circ\widehat{(E_B)^\sim}$, a faithful normal operator valued weight from $\langle M,B^\sim\rangle$ onto $A^\sim$, where $A^\sim$ is the unitalization of $A$ inside $M$ and $(E_A)^\sim : M \to A^\sim$ is  a faithful normal conditional expectation extending $E_A$. Observe $\Phi(f) \in \mathcal{Z}(A)$, and thus one can choose a non-zero spectral projection $e \in \mathcal{Z}(A)$ of $\Phi(f)$ with $\Phi(f)e \geq \delta e$ for some $\delta > 0$. Observe that $ef$ is still a non-zero projection in $A' \cap 1_A {}^t 1_B\,\langle M,B^\sim\rangle\,1_A {}^t 1_B$ (since $e \in \mathcal{Z}(A^\sim)$). Replacing $f$ with $ef$ we may and do assume that $\Phi(f) \geq \delta s$ and $f \leq s$ with denoting by $s$ the support projection of $\Phi(f)$. We can choose a non-zero positive element $c \in \mathcal{Z}(A)$ so that $\Phi(f)\,c = c\,\Phi(f) = s$. The mapping $x \in f\langle M,B^\sim\rangle f \mapsto \Phi(x)cf \in Af$ gives a faithful normal conditional expectation. 
Therefore, the desired assertion follows, since $Af \subseteq f\langle M,B^\sim\rangle f$ is conjugate to $\theta(A) \subseteq \theta(1_A)\mathbb{M}_n(B^\sim)\theta(1_A)$ by the composition of the mapping 
$$
x \in f\langle M,B^\sim\rangle f \mapsto \mathrm{diag}(x,0\dots,0) \mapsto V\mathrm{diag}(x,0\dots,0)V^* \in e_B^{(n)}\mathbb{M}_n(\langle M,B^\sim\rangle)e_B^{(n)}
$$ 
and the inverse of $y \in \mathbb{M}_n(B) \mapsto ye_B^{(n)} \in \mathbb{M}_n(B)e_B^{(n)} = e_B^{(n)}\mathbb{M}_n(\langle M,B^\sim\rangle)e_B^{(n)}$ in this order.  
\end{proof}

\section{Proof of Theorem \ref{T1}} 

We begin by recalling the weak mixing property for group actions as well as for faithful normal states. Let $M$ be a von Neumann algebra with a distinguished faithful normal state $\varphi$. A $\varphi$-preserving action $\alpha : G \curvearrowright N$ of a second countable, locally compact group $G$ is said to be weakly mixing if its canonical implementing unitary representation of $\pi: G \curvearrowright L^2(M)$ (defined by $\pi(g) x\xi_\varphi := \alpha_g(x)\xi_\varphi$, $x \in M$) satisfies that for every finite subset $\Omega$ of a given dense subset of $L^2(M)\ominus\mathbb{C}\xi_\varphi$ and every $\varepsilon > 0$ there exists a $g \in G$ such that $|(\pi(g)\xi|\zeta)_{L^2(M)}| < \varepsilon$ for all $\xi,\zeta \in \Omega$.  We also say that the state $\varphi$ itself is weakly mixing if its modular automorphism group $\sigma^\varphi : \mathbb{R} \curvearrowright M$ is weakly mixing. 

\medskip
Let $(M,\varphi)=\bigstar_{i=1}^m(M_i,\varphi_i)$ be a non-trivial free product of $\sigma$-finite von Neumann algebras endowed with faithful normal states. Let $E_{M_i} : M \to M_i$ be the unique $\varphi$-preserving conditional expectation. We will regard each $M_i$ as a von Neumann subalgebra of the resulting free product von Neumann algebra $M$ and hence omit the canonical embedding map from $M_i$ into $M$. 

\medskip
We first introduce a normal bounded projection onto the subspace spanned by `letters' as follows. Set 
\begin{equation}\label{Eq1}
\Phi(x) : = \varphi(x)1 + \sum_{i=1}^m E_{M_i}(x - \varphi(x)1), \quad x \in M.
\end{equation}
Clearly, this is a normal self-adjoint linear map from $M$ to itself. Moreover, we have the following properties: 

\begin{lemma}\label{L5} The following hold true: 
\begin{itemize}
\item[(1)] $\Phi\circ\Phi = \Phi$ and $\Phi(M) = M_1 + \cdots + M_m$ . 
\item[(2)] $\Phi\circ\sigma_t^\varphi = \sigma_t^\varphi\circ\Phi$ for every $t \in \mathbb{R}$. 
\item[(3)] $\Phi^\omega : (x_n)^\omega \in M^\omega \mapsto (\Phi(x_n))^\omega \in M^\omega$ defines a well-defined normal self-adjoint idempotent map and its range is exactly $M_1^\omega + \cdots + M_m^\omega$. 
\end{itemize}
\end{lemma}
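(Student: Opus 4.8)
The plan is to handle the three items in order, using the orthogonal ``word-length'' structure of the free product for~(1), the standard modular description of a free-product state for~(2), and the Ando--Haagerup ultraproduct machinery for~(3). For~(1) I would first record the two elementary consequences of freeness that drive everything: $E_{M_i}(1)=1$, and for $a\in M_j$ one has $E_{M_i}(a)=\delta_{ij}\,a+(1-\delta_{ij})\varphi(a)1$, the off-diagonal case being the familiar identity $E_{M_i}(x)=\varphi(x)1$ for $x$ in a free subalgebra. From these, $\Phi$ fixes $\mathbb{C}1$, fixes each centred space $M_i\ominus\mathbb{C}1$, and therefore restricts to the identity on the algebraic sum $M_1+\cdots+M_m=\mathbb{C}1+\sum_i(M_i\ominus\mathbb{C}1)$. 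Since every $\Phi(x)$ manifestly lies in $M_1+\cdots+M_m$, this yields at once $\Phi(M)=M_1+\cdots+M_m$ and the idempotence $\Phi\circ\Phi=\Phi$. It is convenient to also record the rewriting $\Phi=\sum_{i=1}^m E_{M_i}-(m-1)\,\varphi(\cdot)1$, which exhibits $\Phi$ as a fixed linear combination of the $\varphi$-preserving normal conditional expectations $E_{M_i}$ and $E_{\mathbb{C}1}=\varphi(\cdot)1$; this is the form I would use in~(3).

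For~(2) I would use that each $M_i$ is globally $\sigma^\varphi$-invariant, with $\sigma^\varphi_t$ restricting to $\sigma^{\varphi_i}_t$ on $M_i$ (the standard modular behaviour of a free-product state), so that by Takesaki's theorem each $E_{M_i}$ commutes with $\sigma^\varphi_t$. Combined with $\sigma^\varphi_t(1)=1$ and $\varphi\circ\sigma^\varphi_t=\varphi$, the two expansions of $\sigma^\varphi_t\circ\Phi$ and $\Phi\circ\sigma^\varphi_t$ on a general $x$ then agree term by term; equivalently, each summand of the rewriting commutes with $\sigma^\varphi$, hence so does $\Phi$.

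Item~(3) is where the real work lies, and here I would invoke \cite{AndoHaagerup:JFA14}. Since, by~(2), each $E_{M_i}$ is a $\varphi$-preserving normal unital completely positive map commuting with the modular group, it is contractive for the seminorms defining $M^\omega$ and respects the ideal and multiplier structure, hence induces a normal map $(E_{M_i})^\omega\colon M^\omega\to M_i^\omega\subseteq M^\omega$ acting by $(x_n)^\omega\mapsto(E_{M_i}(x_n))^\omega$; likewise $\varphi(\cdot)1$ induces $\varphi^\omega(\cdot)1$, where $\varphi^\omega((x_n)^\omega)=\lim_{n\to\omega}\varphi(x_n)$. Setting $\Phi^\omega:=\sum_i(E_{M_i})^\omega-(m-1)\,\varphi^\omega(\cdot)1$ gives a well-defined normal map on $M^\omega$, and unwinding the formulas shows $\Phi^\omega((x_n)^\omega)=(\Phi(x_n))^\omega$, which is exactly the claimed well-definedness of $\Phi^\omega$. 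Self-adjointness and idempotence then descend entrywise from $\Phi(x)^*=\Phi(x^*)$ and from $\Phi\circ\Phi=\Phi$ proved in~(1). For the range I would argue from both sides: $\Phi(x_n)\in M_1+\cdots+M_m$ together with the identification of each $(E_{M_i})^\omega$-image gives the inclusion $\mathrm{range}\,\Phi^\omega\subseteq M_1^\omega+\cdots+M_m^\omega$, while $\Phi=\mathrm{id}$ on each $M_i$ forces $\Phi^\omega=\mathrm{id}$ on each $M_i^\omega$ and hence the reverse inclusion.

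The hard part is this lifting step in~(3): in the non-tracial Ando--Haagerup ultraproduct one cannot lift an arbitrary bounded normal map, and the well-definedness of $(x_n)^\omega\mapsto(\Phi(x_n))^\omega$ genuinely needs $\Phi$ --- equivalently each constituent expectation --- to commute with $\sigma^\varphi$, which is precisely why~(2) is placed before~(3). The points that must be checked with care are that $E_{M_i}$ preserves the trivial ideal $\mathcal{I}_\omega$ (by contractivity for the defining seminorms) and the multiplier algebra, and that the inclusion $M_i^\omega\subseteq M^\omega$ is the canonical one; all of this is furnished by the Ando--Haagerup framework once the modular compatibility of~(2) is available.
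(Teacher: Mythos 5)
Your proposal is correct and follows essentially the same route as the paper: item (1) by the same elementary computation with $E_{M_i}(\mathrm{Ker}\,\varphi)=\mathrm{Ker}\,\varphi_i$ and $E_{M_i}(M_j^\circ)=\{0\}$ for $i\neq j$, item (2) from $\varphi\circ\sigma^\varphi_t=\varphi$ and Takesaki's theorem, and item (3) by identifying the entrywise lift with the intrinsic map $x\mapsto\varphi^\omega(x)1+\sum_i E_{M_i^\omega}(x-\varphi^\omega(x)1)$ built from the Ando--Haagerup ultraproduct expectations. Your extra remarks (the rewriting $\Phi=\sum_i E_{M_i}-(m-1)\varphi(\cdot)1$ and the explanation of why modular compatibility is what makes the lift to $M^\omega$ legitimate) are accurate elaborations of what the paper leaves implicit.
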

\begin{proof} 
(1) is just a computation by using $E_{M_i}(\mathrm{Ker}(\varphi)) = \mathrm{Ker}(\varphi_i)$ and $E_{M_j}(\mathrm{Ker}(\varphi_i)) = \{0\}$ as long as $i\neq j$.

(2) follows from the well-known fact that $\varphi\circ\sigma_t^\varphi = \varphi$ and $E_{M_i}\circ\sigma_t^\varphi = \sigma_t^\varphi\circ E_{M_i}$ for $t \in \mathbb{R}$. 

(3) follows from that the normal self-adjoint map
$$
\Psi : x \in M^\omega \mapsto 
\varphi^\omega(x)1 + \sum_{i=1}^m E_{M_i^\omega}(x-\varphi^\omega(x)1) \in M^\omega
$$
satisfies that $\Psi((x_n)^\omega) = (\Phi(x_n))^\omega = \Phi^\omega((x_n)^\omega)$ for every $(x_n)^\omega \in M^\omega$ by definition. Here $E_{M_i^\omega} : M^\omega \to M_i^\omega$ denotes the $\varphi^\omega$-preserving conditional expectation. 
\end{proof}

We will provide three {\it general} lemmas (Lemmas \ref{L6}--\ref{L8}), where we always assume that for every $1 \leq i \leq m$ there exists a $\varphi_i$-preserving weakly mixing action $\alpha^{(i)} : G \curvearrowright M_i$ of a common second countable, locally compact group $G$ and let $\alpha = \bigstar_{i=1}^m \alpha^{(i)} : G \curvearrowright M$ be the so-called free product action, i.e., $\alpha\!\upharpoonright_{M_i} = \alpha^{(i)}$ for each $1 \leq i \leq n$. This action gives the required rigidity for making the fundamental principle of Popa's deformation/rigidity theory work well. 

\medskip
The next lemma will be proved by combining the ideas of \cite[Theorem 3.1]{Houdayer:CMP15} (whose original idea dates back to \cite[subsection 2.1]{Popa:AdvMath83}; also see \cite[Theorem 3.1]{HoudayerUeda:MPCPS16} for its finalized version) and \cite[Theorem 4.3]{HoudayerRaum:MathAnn15} (also see \cite{BoutonnetHoudayer:APDE16}). The novelty of the next lemma is the use of the normal self-adjoint linear map $\Phi : M \to M$ introduced above in place of $E_{M_i}$. The key observation behind this is that the orthogonal projection from $L^2(M)$ onto $\overline{L^2(M_1)+\cdots+L^2(M_m)} = \mathbb{C}\xi_\varphi \oplus L^2(M_1)^\circ \oplus\cdots\oplus L^2(M_m)^\circ$ is an extension of $x\xi_\varphi \in M\xi_\varphi \mapsto \Phi(x)\xi_\varphi = \varphi(x)\xi_\varphi + \sum_{i=1}^m E_{M_i}(x-\varphi(x)1)\xi_\varphi \in M\xi_\varphi$, where we write $L^2(M_i)^\circ := L^2(M_i) \ominus \mathbb{C}\xi_{\varphi_i}$, $1 \leq i \leq m$.

\begin{lemma}\label{L6} 
For any $x \in (M^\omega)^{(\alpha^\omega,G)}$ and $y,z \in \mathrm{Ker}(\Phi)$, the vectors
$$
y(x-\Phi^\omega(x))\xi_{\varphi^\omega}, \quad 
(y\Phi^\omega(x) - \Phi^\omega(x)z)\xi_{\varphi^\omega}, \quad 
(\Phi^\omega(x)-x)z\xi_{\varphi^\omega}
$$
are mutually orthogonal in $L^2(M^\omega)$. Here, we set $(M^\omega)^{(\alpha^\omega,G)} := \bigcap_{g \in G} (M^\omega)^{\alpha^\omega_g}$, being a von Neumann subalgebra of $M^\omega$. 
\end{lemma}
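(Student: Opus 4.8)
Write $\xi:=\xi_{\varphi^\omega}$, $a:=\Phi^\omega(x)$ and $b:=x-\Phi^\omega(x)$, so that the three vectors in question are $\xi_1=yb\xi$, $\xi_2=(ya-az)\xi$ and $\xi_3=-bz\xi$ (their sum being $(yx-xz)\xi$). The plan is to prove the three orthogonality relations $(\xi_i\mid\xi_j)=0$ one at a time, each being $\varphi^\omega$ of a product of four factors. First I would record the structural input. Since each $\alpha_g$ preserves $M_i$ and $\varphi$, it commutes with every $E_{M_i}$ and with $\varphi$, hence with $\Phi$; passing to the ultrapower, $\Phi^\omega\circ\alpha^\omega_g=\alpha^\omega_g\circ\Phi^\omega$, so both $a$ and $b$ are again $\alpha^\omega$-fixed. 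Using that on vectors $\Phi^\omega$ agrees with the orthogonal projection onto $\mathbb{C}\xi\oplus\bigoplus_i(L^2(M_i^\omega)\ominus\mathbb{C}\xi)$ (the extension singled out before the lemma), the vector $a\xi$ is supported on words of length $\le 1$, while $b\xi$, $y\xi$, $z\xi$ are supported on words of length $\ge 2$; in particular $y,z$ decompose into \emph{letters} lying in the base spaces $L^2(M_i)\ominus\mathbb{C}\xi_{\varphi_i}$, whereas the letters of $a,b$ are $\alpha^\omega$-fixed vectors in $L^2(M_i^\omega)$.

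The computation then rests on two facts. (A) A \emph{length obstruction}: since $a\xi$ and $b\xi$ have disjoint word-length supports, $(a\xi\mid b\xi)=0$, and more generally the short factor $a$ can never fully absorb the long factor $b$ in a scalar contraction. (B) A \emph{rigidity obstruction}: any scalar contraction pairing a letter of $y$ or $z$ against a letter of $a$ or $b$ vanishes. Indeed such a contraction is an inner product in $L^2(M_i^\omega)$ of a base letter $\eta\in L^2(M_i)\ominus\mathbb{C}\xi_{\varphi_i}$ against an $\alpha^{(i),\omega}$-fixed vector $\zeta$; replacing $\zeta$ by its orthogonal projection to $L^2(M_i)$, which is an $\alpha^{(i)}$-invariant vector, this projection lands in $\mathbb{C}\xi_{\varphi_i}$ by ergodicity of the weakly mixing $\alpha^{(i)}$, and is therefore orthogonal to the centered $\eta$. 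This is the only place weak mixing (through ergodicity) enters.

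With (A) and (B) in hand I would expand each $\varphi^\omega$-moment by reduced-word orthogonality, i.e.\ by the nearest-neighbour collapses through the expectations $E_{M_i^\omega}$. For $\varphi^\omega(z^*b^*yb)=-(\xi_1\mid\xi_3)$, $\varphi^\omega(z^*a^*yb)$ and $\varphi^\omega(z^*b^*ya)$ the four blocks alternate \emph{base, fixed, base, fixed}, so every junction is base-versus-fixed and (B) forbids any contribution, while a reduced word gives $0$. In $\varphi^\omega(z^*b^*az)$ the two base blocks $z^*$ and $z$ lie at the two ends, each adjacent only to a fixed block, so (B) again prevents either from being absorbed (and the inner $b^*$--$a$ contraction vanishes by (A) as well). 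Finally $\varphi^\omega(a^*y^*yb)$ has the single base--base junction $y^*y$: writing $y^*y=\varphi(y^*y)1+(y^*y-\varphi(y^*y)1)$, the scalar part meets $\varphi^\omega(a^*b)=(a\xi\mid b\xi)=0$ by (A), while the centered remainder is flanked by the fixed blocks $a^*$ and $b$ and is killed by (B). Combining these, all three inner products vanish, which is the assertion.

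The main obstacle is this last step: making the recursive conditional-expectation bookkeeping rigorous inside the ultrapower and checking that the nearest-neighbour recursion genuinely produces no surviving term other than those covered by (A) and (B). The delicate structural point is precisely fact (B): the movable letters of $y,z$ live in the single copies $M_i$, whereas the fixed letters of $a,b$ live in the ultrapowers $M_i^\omega$, so (B) amounts to the statement that the $\alpha^{(i),\omega}$-invariant vectors of $L^2(M_i^\omega)$ meet $L^2(M_i)$ only in $\mathbb{C}\xi_{\varphi_i}$ — organizing all finitely many contractions at once around this single observation is what the proof must pin down.
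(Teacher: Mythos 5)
Your overall plan---reduce to three inner products and kill every contraction between a ``letter'' of $y,z$ and a ``letter'' of $x$---founders on your fact (B), and the gap there is exactly the hard point of the lemma. You treat $b=x-\Phi^\omega(x)$ as if it had a reduced-word decomposition whose letters are $\alpha^{(i),\omega}$-fixed vectors of $L^2(M_i^\omega)$. Neither half of this is available: $M^\omega$ is \emph{not} the free product of the $M_i^\omega$ (that free product is only the subalgebra $\bigvee_i M_i^\omega$, which $x$ has no reason to lie in), so a general $x\in(M^\omega)^{(\alpha^\omega,G)}$ has no letter decomposition over the $M_i^\omega$ at all; and if you instead decompose the representing sequence $x=(x_n)^\omega$ word by word inside $L^2(M)$, the individual letters of $x_n$ carry no invariance whatsoever---the hypothesis only says $\alpha_g(x_n)-x_n\to0$ along $\omega$ for the \emph{whole} element, and asymptotic invariance of a word does not localize to its letters (this is precisely the ergodic-versus-weakly-mixing distinction for tensor products of representations). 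Consequently your proposed reduction ``project the fixed letter $\zeta$ to $L^2(M_i)$ and invoke ergodicity'' has nothing to apply to, and the nearest-neighbour expansion of, say, $\varphi^\omega(z^*b^*yb)$ cannot even be set up by free-independence bookkeeping, since $b$ is not free from anything over the $M_i^\omega$.

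The paper gets around this in a genuinely different and necessarily quantitative way. It first approximates $y,z$ by sums of reduced words whose letters span \emph{finite-dimensional} subspaces $V_i\subseteq M_i^\circ$, and splits $L^2(M)$ using $V_i$ and its complement $W_i$ into $\mathbb{C}\xi_\varphi\oplus\bigoplus_iL^2(M_i)^\circ\oplus\overline{\sum_i(\mathcal{L}_i+\mathcal{R}_i)}\oplus\mathcal{X}$. The only place weak mixing enters is the first step: one finds $2^K$ group elements making the translates $\pi(g_k)\mathcal{L}_i$ mutually $\varepsilon$-orthogonal, and Houdayer's $\varepsilon$-orthogonality estimate together with $\Vert(\alpha_{g_k}(x_n)-x_n)\xi_\varphi\Vert\to0$ forces $\lim_{n\to\omega}\Vert P_{\mathcal{L}_i}x_n\xi_\varphi\Vert=\lim_{n\to\omega}\Vert P_{\mathcal{R}_i}x_n\xi_\varphi\Vert=0$. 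This is the correct substitute for your (B): it does not claim the letters of $x_n$ are invariant, only that asymptotically they cannot overlap the fixed finite-dimensional spaces $V_i$. The remaining orthogonality of $y\mathcal{X}$, $(y\Phi(x_n)-\Phi(x_n)z)\xi_\varphi$ and $J^M\sigma^\varphi_{-\mathrm{i}/2}(z^*)J^M\mathcal{X}$ is then pure word combinatorics in $V_i$ versus $W_i$, needing no invariance at all. (You also skip the analytic-element approximation needed to realize right multiplication by $z$ as $J^M\sigma^\varphi_{-\mathrm{i}/2}(z^*)J^M$, but that is minor compared with the gap at (B).) Your observation (A) is fine but carries almost none of the weight.
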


Note that we do not assume that the action $\alpha^\omega : G \curvearrowright M^\omega$ is continuous in the $u$-topology. 

\begin{proof} Thanks to the Kaplansky density theorem, one can choose bounded nets $y_\lambda$ and $z_\lambda$ of elements in the unital $*$-subalgebra (algebraically) generated by $\sigma^{\varphi_i}$-analytic elements in $M_i$, $1 \leq i \leq m$, such that $\lim_\lambda y_\lambda = y$ and $\lim_\lambda z_\lambda = z$ $\sigma$-strongly. Then the $y_\lambda - \Phi(y_\lambda)$ and the $z_\lambda - \Phi(z_\lambda)$ fall into $\mathrm{Ker}(\Phi)$ by Lemma \ref{L5}(1) and are $\sigma^\varphi$-analytic by Lemma \ref{L5}(2). Moreover, $\lim_\lambda (y_\lambda - \Phi(y_\lambda)) = y - \Phi(y) = y$ and $\lim_\lambda (z_\lambda - \Phi(z_\lambda)) = z - \Phi(z) = z$ $\sigma$-strongly, and furthermore we observe that the $y_\lambda - \Phi(y_\lambda)$ and the $z_\lambda - \Phi(z_\lambda)$ fall into the linear span of the reduced words of length $\geq 2$ whose letters are all $\sigma^{\varphi_i}$-analytic. Hence, we may assume that $y = \sum_{k=1}^\ell y_k$ and $z = \sum_{k=1}^{\ell'} z_k$ such that all $y_k$ and $z_k$ are reduced words in the $M_i^\circ := \mathrm{Ker}(\varphi_i)$ of length $\geq 2$ whose letters are all $\sigma^{\varphi_i}$-analytic. By definition we can also write $x = (x_n)^\omega$ with a bounded sequence $(x_n)$ of elements in $M$. Lemma \ref{L5}(3) shows that $x - \Phi^\omega(x) = (x_n - \Phi(x_n))^\omega$ holds for every $n$.  

\medskip
For each $1 \leq i \leq m$ we define the finite dimensional subspace $V_i$ of $M_i^\circ$ generated by all the $a, a^*, \sigma_\mathrm{i}^{\varphi_i}(a)^*$ of $M_i^\circ$-letters $a$ appearing in the $y_k$ and the $z_k$, and denote by $W_i$ the orthogonal complement of $V_i$ in $M_i^\circ$, that is, the range of the mapping $x \in M_i^\circ \mapsto x - \sum_{j=1}^{d_i} \varphi_i(v_{ij}^* x)v_{ij}\in M_i^\circ$ with an orthogonal basis $\{v_{ij}\}_{j=1}^{d_i}$ of $V_i$ with respect to the inner product $(a|b)_{\varphi_i} := \varphi_i(b^* a)$, $a, b \in M_i$. Observe that $M_i^\circ = V_i + W_i$ for $1 \leq i \leq m$. 

Consider the following subspaces $\mathcal{L}_i$, $\mathcal{R}_i$, $1 \leq i \leq m$, and $\mathcal{X}$ of $L^2(M)$ as follows. 
\begin{align*} 
\mathcal{L}_i :&= \big[\{\text{reduced words starting in $V_i$ of length $\geq 2$}\}\xi_\varphi\big], \\ 
\mathcal{R}_i :&= \big[\{\text{reduced words ending in $V_i$ of length $\geq 2$}\}\xi_\varphi\big], \\
\mathcal{X} :&= \big[\{\text{reduced words starting and ending in $W_i$ ($1 \leq i \leq m$) of length $\geq 2$}\}\xi_\varphi\big], 
\end{align*}
where $[\cdots]$ means the operation of closed linear span. Observe that 
$$
L^2(M) = \mathbb{C}\xi_\varphi \oplus L^2(M_1)^\circ\oplus\cdots\oplus L^2(M_m)^\circ \oplus \overline{(\mathcal{L}_1+\cdots+\mathcal{L}_m + \mathcal{R}_1+\cdots\mathcal{R}_m)} \oplus \mathcal{X}
$$ 
and that the mapping $a\xi_\varphi \mapsto \Phi(a)\xi_\varphi$ induces the orthogonal projection from $L^2(M)$ onto $ \mathbb{C}\xi_\varphi \oplus L^2(M_1)^\circ\oplus\cdots\oplus L^2(M_2)^\circ$. 

\medskip
The first step is to show that $x\xi_{\varphi^\omega} = (P_\mathcal{X}x_n\xi_\varphi + \Phi(x_n)\xi_\varphi)^\omega$ holds inside $L^2(M)^\omega$. To this end it suffices to prove that $\lim_{n\to\omega}\Vert P_{\mathcal{L}_i}x_n\xi_\varphi\Vert_{L^2(M)} = \lim_{n\to\omega}\Vert P_{\mathcal{R}_i}x_n\xi_\varphi\Vert_{L^2(M)} = 0$ for every $1 \leq i \leq m$. 

Set 
$$
\mathcal{H}_L(i) := \sideset{}{^\oplus}\sum_{i \neq i(1)} L^2(M_{i(1)})^\circ \oplus \sideset{}{^\oplus}\sum_{i \neq i(1) \neq i(2)} L^2(M_{i(1)})^\circ\botimes L^2(M_{i(2)})^\circ\,\oplus \cdots 
$$
(the direct sum of all the `traveling' tensor products $L^2(M_{i(1)})^\circ\,\botimes\,\cdots\,\botimes\,L^2(M_{i(\ell)})^\circ$ ($i(1) \neq i(2) \neq \cdots$) not starting at $L^2(M_i)^\circ$). Then $\mathcal{L}_i$ may be identified with $(V_i\xi_{\varphi_i})\,\botimes\,\mathcal{H}_L(i) \subseteq L^2(M_i)^\circ\,\botimes\,\mathcal{H}_L(i)$. Let $\pi_i : G \curvearrowright L^2(M_i)$ be the canonical implementing unitary representation of the action $\alpha^{(i)} : G \curvearrowright M_i$. Similarly, let $\pi : G \curvearrowright L^2(M)$ be the canonical implementing unitary representation of the action $\alpha : G \curvearrowright M$. 
 Then one has $\pi(g) = \bigstar_{i=1}^m \pi_i(g)$ for $g \in G$, that is, its restriction to the reduced subspace $L^2(M_{i(1)})^\circ\,\botimes\,\cdots\,\botimes\,L^2(M_{i(\ell)})^\circ$ is given by $\pi_{i(1)}(g)\,\otimes\,\cdots\,\otimes\,\pi_{i(\ell)}(g)$, $g \in G$. Hence the restriction of $\pi(g)$ to $\mathcal{L}_i$ is the restriction of $\pi_i(g)\,\otimes \pi_{L,i}(g)$ on $L^2(M_i)^\circ\,\botimes\,\mathcal{H}_L(i)$ to $(V_i\xi_{\varphi_i})\,\botimes\,\mathcal{H}_L(i)$ with a certain unitary representation $\pi_{L,i} : G \curvearrowright \mathcal{H}_L(i)$.  

For every $n \in \mathbb{N}$ and every $g \in G$ we have 
\begin{equation}\label{Eq2}
\begin{aligned}
\Vert P_{\mathcal{L}_i} x_n\xi_\varphi\Vert_{L^2(M)}^2 
&= 
\Vert \pi(g)P_{\mathcal{L}_i} x_n\xi_\varphi\Vert_{L^2(M)}^2 \\
&= 
\Vert \pi(g)P_{\mathcal{L}_i} x_n\xi_\varphi - P_{\pi(g)\mathcal{L}_i} x_n\xi_\varphi + P_{\pi(g)\mathcal{L}_i} x_n\xi_\varphi \Vert_{L^2(M)}^2 \\
&\leq 
2\Vert P_{\pi(g)\mathcal{L}_i} (\alpha_g(x_n) - x_n)\xi_\varphi\Vert_{L^2(M)}^2 + 2\Vert P_{\pi(g)\mathcal{L}_i} x_n\xi_\varphi \Vert_{L^2(M)}^2 \\
&\leq 
2\Vert (\alpha_g(x_n) - x_n)\xi_\varphi\Vert_{L^2(M)}^2 + 2\Vert P_{\pi(g)\mathcal{L}_i} x_n\xi_\varphi \Vert_{L^2(M)}^2. 
\end{aligned}
\end{equation} 

Let $K \in \mathbb{N}$ be arbitrarily chosen. According to \cite[Proposition 2.3]{Houdayer:TAMS14} we choose $0 < \varepsilon < 1/2$ in such a way that $\prod_{k=1}^{K-1} (1+\delta^{\circ k}(\varepsilon))^2 \leq 2$, where $\delta^{\circ k}$ means the $k$-times composition of the function 
$$
\delta : t \in [0,1/2) \mapsto \frac{2t}{\sqrt{1-t -\sqrt{2}t\sqrt{1-t}}} \in [0,+\infty). 
$$
Let $\{v_{ij}\}_{j=1}^{d_i}$ be an orthonormal basis of $V_i$ as before. Then $\{v_{ij}\xi_{\varphi_i}\}_{j=1}^{d_i}$ becomes an orthonormal basis of the (closed) subspace $V_i\xi_{\varphi_i}$ of $L^2(M_i)^\circ$. By using the weak mixing property of $\alpha^{(i)} : G \curvearrowright M_i$ one can inductively find a distinct $2^K$-tuple $g_1,\dots,g_{2^K} \in G$ in such a way that 
\begin{equation}\label{Eq3}
\max_{1 \leq j_1, j_2 \leq d_i} \big|(\pi_i(g_{k_1})v_{ij_1}\xi_{\varphi_i}\,|\,\pi_i(g_{k_2})v_{ij_2}\xi_{\varphi_i})_{L^2(M_i)}\big| \leq \frac{\varepsilon}{d_i}
\end{equation}
as long as $k_1 \neq k_2$. In fact, $g_1$ can arbitrarily be chosen, and assume that we have chosen $g_1,\dots,g_p$ in such a way that the above inequality holds for any pair $k_1 \neq k_2 \leq p$. Then, applying the weak mixing property to the finite set $\{\pi_i(g_k)v_{ij} \mid 1 \leq j \leq d_i, 1 \leq k \leq p \}$ we find $g_{p+1} \in G$ so that
$$
\big|(\pi_i(g_{p+1})v_{ij_1}\xi_{\varphi_i}\,|\,\pi_i(g_k)v_{ij_2}\xi_{\varphi_i})_{L^2(M_i)}\big| \leq \frac{\varepsilon}{d_i}
$$
for all $1 \leq j_1, j_2 \leq d_i$ and $1 \leq k \leq p$. If  $g_{p+1} = g_k$ for some $1 \leq k \leq p$, then $1 = (v_{ij}\,|\,v_{ij})_{\varphi_i} = \big|(\pi_i(g_{p+1})v_{ij}\xi_{\varphi_i}\,|\,\pi_i(g_k)v_{ij}\xi_{\varphi_i})_{L^2(M_i)}\big| \leq \varepsilon/d_i \leq 1/2$, a contradiction. Hence $g_{p+1}$ is different from $g_1,\dots,g_p$. In this way, we can inductively obtain the desired family $g_1,\dots,g_{2^K} \in G$. 

Let $\xi, \eta \in \mathcal{L}_i = (V_i\xi_{\varphi_i})\,\botimes\,\mathcal{H}_L(i)$ be arbitrarily chosen. Then we can write $\xi = \sum_{j=1}^{d_i} v_{ij}\xi_{\varphi_i}\otimes  \xi_j$ and $\eta = \sum_{j=1}^{d_i} v_{ij}\xi_{\varphi_i}\otimes  \eta_j$ with $\xi_j,\eta_j \in \mathcal{H}_L(i)$. If $k_1 \neq k_2$, then  
\begin{align*} 
&\big|(\pi(g_{k_1})\xi\,|\,\pi(g_{k_2})\eta)_{L^2(M)}\big| \\
&\leq 
\sum_{j_1,j_2 = 1}^{d_i} \big|(\pi_i(g_{k_1})v_{ij_1}\xi_{\varphi_i}\,|\,\pi_i(g_{k_2})v_{ij_2}\xi_{\varphi_i})_{L^2(M_i)}\big|\cdot \big|(\pi_{L,i}(g_{k_1})\xi_{j_1}\,|\,\pi_{L,i}(g_{k_2})\eta_{j_2})_{\mathcal{H}_L(i)}\big| \\
&\leq \frac{\varepsilon}{d_i} \sum_{j_1,j_2 = 1}^{d_i} \Vert \xi_{j_1}\Vert_{\mathcal{H}_L(i)}\cdot\Vert \eta_{j_2}\Vert_{\mathcal{H}_L(i)} \quad (\text{use the above \eqref{Eq3}}) \\
&\leq 
\varepsilon \sqrt{\sum_{j=1}^{d_i} \Vert \xi_j \Vert_{\mathcal{H}_L(i)}^2}\cdot\sqrt{\sum_{j=1}^{d_i} \Vert \eta_j \Vert_{\mathcal{H}_L(i)}^2} \quad (\text{by the Cauchy--Schwarz inequality ($d_i$-times)}) \\
&= 
\varepsilon\,\Vert \xi\Vert_{L^2(M)}\cdot\Vert \eta\Vert_{L^2(M)}. 
\end{align*}  
This means that the subspaces $\pi(g_k)\mathcal{L}_i$. $1 \leq k \leq 2^K$, are mutually $\varepsilon$-orthogonal in the sense of \cite[Definition 2.1]{Houdayer:TAMS14}. Therefore, by \cite[Proposition 2.3]{Houdayer:TAMS14} we get  
$$
\sum_{k=1}^{2^K} \Vert P_{\pi(g_k)\mathcal{L}_i} x_n\xi_\varphi \Vert_{L^2(M)}^2 \leq 4 \Vert  x_n\xi_\varphi \Vert_{L^2(M)}^2,
$$
and this and inequality \eqref{Eq2} imply that 
$$
2^K \Vert P_{\mathcal{L}_i} x_n\xi_\varphi\Vert_{L^2(M)}^2 \leq 2\sum_{k=1}^{2^K}\Vert (\alpha_{g_k}(x_n) - x_n)\xi_\varphi\Vert_{L^2(M)}^2 + 4 \Vert  x_n\xi_\varphi \Vert_{L^2(M)}^2. 
$$
Since $\alpha_g^\omega(x) = x$ for all $g \in G$, we obtain that $\lim_{n\to\omega} \Vert P_{\mathcal{L}_i} x_n\xi_\varphi\Vert_{L^2(M)}^2 \leq 2^{2-K} \Vert  x\xi_{\varphi^\omega} \Vert_{L^2(M^\omega)}^2$. Since $K$ can arbitrarily be large, we conclude that $\lim_{n\to\omega} \Vert P_{\mathcal{L}_i} x_n\xi_\varphi\Vert_{L^2(M)} = 0$. 

In the same way, we also obtain that $\lim_{n\to\omega} \Vert P_{\mathcal{R}_i} x_n\xi_\varphi\Vert_{L^2(M)} = 0$. Therefore, we have finished the first step. 

\medskip
The second step is to show that $y\mathcal{X}$, $y(M_1+\cdots+M_m)\xi_\varphi + (M_1+\cdots+M_m)z\xi_\varphi$ and $J^M\sigma_{-\mathrm{i}/2}^\varphi(z^*)J^M\mathcal{X}$ are mutually orthogonal in $L^2(M)$. Since the adjoint $a^*$ and $\sigma_\mathrm{i}^{\varphi_i}(a)^*$ of any $M_i^\circ$-letter $a$ in the $y_k$ and $z_k$ fall in $V_i$, it is plain to see that $y\mathcal{X}$ and $J^M\sigma_{-\mathrm{i}/2}^\varphi(z^*)J^M\mathcal{X}$ sit in 
\begin{align*} 
&\big[\{\text{reduced words starting in some $V_i$ and ending in some $W_i$ of length $\geq 3$}\}\xi_\varphi\big], \\
&
\big[\{\text{reduced words starting in some $W_i$ and ending in some $V_i$ of length $\geq 3$}\}\xi_\varphi\big],
\end{align*}
respectively. The choice of the subspaces $V_i$, $W_i$ guarantees that $y\mathcal{X}$ and $J^M\sigma_{-\mathrm{i}/2}^\varphi(z^*)J^M\mathcal{X}$ are orthogonal. Since $y_k^* y_{k'}$ is a linear combination of $1$ and reduced words in the $V_i$, any element in $y_k^* y_{k'} M_i$ is a linear combination of an element in $M_i^\circ$ and reduced words starting in $V_i$, and hence $\mathcal{X}$ and $y_k^*y_{k'} M_i\xi_\varphi$ are orthogonal, so that so are $y_k \mathcal{X}$ and $y_{k'}M_i\xi_\varphi$. Since any element of $M_i z_k$ is a linear combination of an element in $M_i^\circ$ and reduced words ending in some $V_i$, $y\mathcal{X}$ and $M_i z_k\xi_\varphi$ are orthogonal. Therefore, we have confirmed that $y\mathcal{X}$ and $y(M_1+\cdots+M_m)\xi_\varphi + (M_1+\cdots+M_m)z\xi_\varphi$ are orthogonal. In the same way, we can confirm that $y(M_1+\cdots+M_m)\xi_\varphi + (M_1+\cdots+M_m)z\xi_\varphi$ and $J^M\sigma_{-\mathrm{i}/2}^\varphi(z^*)J^M\mathcal{X}$ are orthogonal. Hence we have finished the second step.  

\medskip
Let us finalize this proof. By the first step we have
\begin{align*}
y(x-\Phi^\omega(x))\xi_{\varphi^\omega} 
&= 
(y(x_n - \Phi(x_n))\xi_\varphi)^\omega = (yP_\mathcal{X} x_n\xi_\varphi)^\omega, \\ 
(y\Phi^\omega(x)-\Phi^\omega(x)z)\xi_{\varphi^\omega} 
&= 
((y\Phi(x_n) - \Phi(x_n)z)\xi_\varphi)^\omega, \\
(\Phi^\omega(x)-x)z\xi_{\varphi^\omega} 
&= 
(-J^M\sigma_{-\mathrm{i}/2}^\varphi(z^*)J^M(x_n-\Phi(x_n))\xi_\varphi)^\omega \\ 
&= (-J^M\sigma_{-\mathrm{i}/2}^\varphi(z^*)J^M P_\mathcal{X} x_n\xi_\varphi)^\omega 
\end{align*} 
in $L^2(M)^\omega$. The second step shows that for every $n \in \mathbb{N}$ the vectors $yP_\mathcal{X} x_n\xi_\varphi$, $(y\Phi(x_n) - \Phi(x_n)z)\xi_\varphi$ and $J^M\sigma_{-\mathrm{i}/2}^\varphi(z^*)J^M P_\mathcal{X} x_n\xi_\varphi$ are mutually orthogonal. Thus, the desired assertion immediately follows.  
\end{proof}

The next lemma is a kind of rigidity result on free products of amenable von Neumann algebras and will work for showing the rigidity of the number of free components in the proof of Theorem \ref{T1}. 

\begin{lemma}\label{L7} Any amenable von Neumann subalgebra $Q$ of $M$ with separable predual such that $Q'\cap(M^\omega)^{(\alpha^\omega,G)}$ is diffuse and with expectation must embed into $M_i$ inside $M$, that is, $Q \preceq_M M_i$, for some $1 \leq i \leq m$. 
\end{lemma}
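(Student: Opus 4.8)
The plan is to argue by contraposition, with Popa's criterion entering through Proposition \ref{P3}. Suppose $Q\not\preceq_M M_i$ for every $1\le i\le m$. Since $Q$ is amenable and each $M_i\subseteq M$ carries the $\varphi$-preserving expectation $E_{M_i}$, Proposition \ref{P3} says that item~(1) fails for each $i$; because the family is finite, running the convexity argument behind that proposition simultaneously for the element $d=\sum_i\sum_{y\in\mathcal F}y\,e_{M_i}y^*$ (rather than for a single component) produces \emph{one} net $(u_\lambda)$ of unitaries in $Q$ with $E_{M_i}(a^*u_\lambda b)\to 0$ $\sigma$-strongly for all $a,b\in M$ and all $i$. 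Taking $a=b=1$ and using $\varphi=\varphi_i\circ E_{M_i}$ gives $\varphi(u_\lambda)\to 0$ and $\Vert E_{M_i}(u_\lambda)\xi_\varphi\Vert\to 0$, whence, by the formula $\Phi(u_\lambda)=(1-m)\varphi(u_\lambda)1+\sum_i E_{M_i}(u_\lambda)$ (Lemma \ref{L5}), the crucial fact
\[
\Phi(u_\lambda)\xi_\varphi\longrightarrow 0 \quad\text{in } L^2(M).
\]
The goal is to use this net to show $Q'\cap(M^\omega)^{(\alpha^\omega,G)}=\mathbb{C}1$, which contradicts the assumed diffuseness and therefore forces $Q\preceq_M M_i$ for some $i$.

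So fix $x\in Q'\cap(M^\omega)^{(\alpha^\omega,G)}$; subtracting $\varphi^\omega(x)1$ (which stays in the relative commutant and is $G$-fixed) I may assume $\varphi^\omega(x)=0$, and I must prove $\Vert x\xi_{\varphi^\omega}\Vert=0$. The \emph{one} exact input from $x\in Q'$ is the commutation $xu_\lambda=u_\lambda x$, giving the \emph{constant} identity $\Vert x\xi_{\varphi^\omega}\Vert^2=(x u_\lambda\xi_{\varphi^\omega}\,|\,u_\lambda x\xi_{\varphi^\omega})$ for every $\lambda$. The heart of the proof is a \emph{second}, structural evaluation of the same inner product, not using commutation, showing it tends to $0$; comparing the two then yields $\Vert x\xi_{\varphi^\omega}\Vert^2=0$. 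For the structural estimate I would split $u_\lambda=\Phi(u_\lambda)+(u_\lambda-\Phi(u_\lambda))$ with $u_\lambda-\Phi(u_\lambda)\in\mathrm{Ker}(\Phi)$: every contribution carrying the factor $x\Phi(u_\lambda)\xi_{\varphi^\omega}$ vanishes because $\Phi(u_\lambda)\xi_\varphi\to 0$, and there remain the terms built from the ``letter'' $u_\lambda-\Phi(u_\lambda)$ together with the decomposition $x\xi=\sum_i E_{M_i^\omega}(x)\xi+(x-\Phi^\omega(x))\xi$ into its length-$1$ and length-$\ge 2$ parts. Here Lemma \ref{L6} (applied with $y=z=u_\lambda-\Phi(u_\lambda)$, using only the $G$-invariance of $x$, \emph{not} commutation) splits the relevant vector into three mutually orthogonal pieces, which is exactly the rigidity supplied by weak mixing; the length-$1$ part $\sum_i E_{M_i^\omega}(x)\xi$ is then killed directly by the intertwining net (fixing one argument in $E_{M_i}(a^*u_\lambda b)\to 0$), while the length-$\ge 2$ part and the cross terms are controlled by the orthogonality from Lemma \ref{L6}.

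The hard part will be this second evaluation, i.e. the non-tracial intertwining estimate. Because $\varphi^\omega$ is not a trace one cannot cycle unitaries under the state, so the only usable structure is free independence in $M^\omega$, and the genuinely new obstruction is the \emph{letter-merging} phenomenon: when a letter $E_{M_i}(u_\lambda)$ of $\Phi(u_\lambda)$ meets a reduced word in $x\xi$ whose leading letter also lies in $M_i$, there is no free cancellation and the naive norm bound refuses to decay. Overcoming this is precisely the purpose of the two new devices of the paper: the bounded projection $\Phi$ of \eqref{Eq1} isolates the letters, and the weak-mixing orthogonality of Lemma \ref{L6} pushes the length-$\ge 2$ part of the $G$-invariant element $x$ off any prescribed finite family of letters, so that after averaging the merging contributions dilute away. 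I expect the separability of the predual of $Q$ to be needed here, to make the fixed-argument intertwining convergence uniform enough to apply to the varying vectors coming from $x\xi$. Once $Q'\cap(M^\omega)^{(\alpha^\omega,G)}$ is shown to be trivial, the contradiction with its diffuseness is immediate, completing the proof that $Q\preceq_M M_i$ for some $i$.
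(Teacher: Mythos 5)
Your first half matches the paper: assuming $Q\not\preceq_M M_i$ for all $i$, Proposition \ref{P3} together with the Ioana--Peterson--Popa direct sum trick produces a single net of unitaries $u_j\in Q$ with $\varphi(u_j)\to0$ and $E_{M_i}(u_j)\to0$ strongly, hence $\Phi(u_j)\to0$ strongly, and Lemma \ref{L6} is indeed applied with $y=z=u_j-\Phi(u_j)$ to the commutator $[u_j-\Phi(u_j),x]\xi_{\varphi^\omega}$. But your target is wrong, and the step you invoke to reach it does not exist. What the orthogonality of Lemma \ref{L6} yields is the inequality $\Vert(1-u_j^*\Phi(u_j))(x-\Phi^\omega(x))\xi_{\varphi^\omega}\Vert\leq\Vert[x,\Phi(u_j)]\xi_{\varphi^\omega}\Vert$, whose limit along the net gives only $x=\Phi^\omega(x)$, i.e. $P:=Q'\cap(M^\omega)^{(\alpha^\omega,G)}\subseteq M_1^\omega+\cdots+M_m^\omega$. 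The net cannot touch the length-one part: Popa's criterion controls $E_{M_i}(y^*u_jx)$ for \emph{fixed} $x,y\in M$ with $u_j$ varying, not $E_{M_i^\omega}(x)$ for a fixed ultrapower element $x$, so your claim that the length-$1$ part ``is killed directly by the intertwining net'' has no justification; showing $P=\mathbb{C}1$ is neither achievable by this estimate nor what the lemma requires.

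The missing second half of the argument is precisely where diffuseness and ``with expectation'' actually enter. From $P\subseteq\mathbb{C}1+(M_1^\omega)^\circ+\cdots+(M_m^\omega)^\circ$ one writes $x=\alpha 1+\sum_i x_i^\circ$ and uses free independence of the $M_i^\omega$ applied to $x^2$ (and to products $xy$ of two elements of $P$) to see that $x_i^\circ x_j^\circ=0$ for $i\neq j$, so that at most one $x_i^\circ$ is nonzero and in fact $P\subseteq M_{i_0}^\omega$ for a single $i_0$. Then, since $P\subseteq M_{i_0}^\omega$ is diffuse and with expectation, \cite[Proposition 2.7(i)]{HoudayerUeda:MPCPS16} gives $Q\subseteq P'\cap\big(\bigvee_i M_i^\omega\big)\subseteq M_{i_0}^\omega$, hence $Q\subseteq M_{i_0}^\omega\cap M=M_{i_0}$, contradicting $Q\not\preceq_M M_{i_0}$. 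Without this localization-plus-relative-commutant step your proof does not close: the ``immediate contradiction with diffuseness'' you promise is never reached, because triviality of $P$ is not a consequence of the hypotheses.
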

\begin{proof} 
Suppose on the contrary that $Q \not\preceq_M M_i$ for all $1 \leq i \leq m$. Proposition \ref{P3} together with a direct sum trick (see the proof of \cite[Theorem 4.3]{IoanaPetersonPopa:ActaMath08} due to Ioana--Peterson--Popa) enables us to find a single net $(u_j)_{j\in J}$ of unitaries in $Q$ in such a way that $\lim_j \varphi(u_j) = 0$ as well as $\lim_j E_{M_i}(u_j) = 0$ strongly for every $1 \leq i \leq m$. In particular, $\lim_j \Phi(u_j) = 0$ strongly. Let $x \in Q' \cap (M^\omega)^{(\alpha^\omega,G)}$ be arbitrarily chosen. By Lemma \ref{L6}, the vector $[u_j-\Phi(u_j), x]\xi_{\varphi^\omega}$ is decomposed into a sum of orthogonal ones $(u_j-\Phi(u_j))(x-\Phi^\omega(x))\xi_{\varphi^\omega}$, $[(u_j-\Phi(u_j)),\Phi^\omega(x)]\xi_{\varphi^\omega}$ and $(\Phi^\omega(x)-x)(u_j-\Phi(u_j))\xi_{\varphi^\omega}$ in $L^2(M^\omega)$, and hence
\begin{align*}
\big\Vert (1 - u_j^* \Phi(u_j))(x - \Phi^\omega(x))\xi_{\varphi^\omega}\big\Vert_{L^2(M^\omega)} 
&= 
\big\Vert (u_j - \Phi(u_j))(x - \Phi^\omega(x))\xi_{\varphi^\omega}\big\Vert_{L^2(M^\omega)} \\
&\leq 
\big\Vert [u_j - \Phi(u_j),x]\xi_{\varphi^\omega} \big\Vert_{L^2(M^\omega)} \\
&= \big\Vert [x,\Phi(u_j)]\xi_{\varphi^\omega} \big\Vert_{L^2(M^\omega)}. 
\end{align*}
Taking the limit of this inequality along $j\in J$ we conclude that $x = \Phi^\omega(x) \in M_1^\omega + \cdots M_m^\omega$. Hence we get $P := Q'\cap(M^\omega)^{(\alpha^\omega,G)} \subseteq M_1^\omega + \cdots + M_m^\omega$.

\medskip
We claim the following: $P \subseteq M_{i_0}^\omega$ for some $1 \leq i_0 \leq m$. 

\medskip
Let $x \in P$ be arbitrarily chosen. Since $P \subseteq M_1^\omega + \cdots + M_m^\omega = \mathbb{C}1 + (M_1^\omega)^\circ + \cdots + (M_m^\omega)^\circ$, we write $x = \alpha 1 + \sum_{i=1}^m x_i^\circ$ with $x_i^\circ \in (M_i^\omega)^\circ := \mathrm{Ker}(\varphi_i^\omega)$. This decomposition is uniquely determined thanks to the free independence among the $M_i^\omega$ (see \cite[Proposition 4]{Ueda:TAMS03}). Then we have 
$$
\underbrace{(\alpha^2 + \sum_{i=1}^m \varphi^\omega((x_i^\circ)^2))1}_{\in \mathbb{C}1} + \underbrace{\sum_{i=1}^m (2\alpha x_i^\circ + ((x_i^\circ)^2 - \varphi^\omega((x_i^\circ)^2)1))}_{\in (M_1^\omega)^\circ + \cdots + (M_m^\omega)^\circ} + \sum_{1 \leq i\neq j \leq m} \underbrace{x_i^\circ x_j^\circ}_{\in (M_i^\omega)^\circ (M_j^\omega)^\circ} = x^2
$$
falls into $P \subseteq \mathbb{C}1 + (M_1^\omega)^\circ + \cdots + (M_m^\omega)^\circ$. Since the $M_i^\omega$ are freely independent, we observe that if $i \neq j$, then $x_i^\circ x_j^\circ$ must be $0$. Hence there exists no pair $i \neq j$ such that both $x_i^\circ \neq 0$ and $x_j^\circ \neq 0$ hold. It follows that each $x \in P$ falls into $M_i^\omega$ for some $1 \leq i \leq m$. 

Choose $x \in P \setminus \mathbb{C}1$. (Note that $P$ is diffuse by assumption.) As shown above there exists $1\leq i_0 \leq m$ so that $x \in M_{i_0}^\omega$. Suppose that there exist $1 \leq j \leq m$ with $j\neq i_0$ and $y \in P$ such that $y \in M_j^\omega \setminus M_{i_0}^\omega = M_j^\omega \setminus \mathbb{C}1$. Then we can write $x = \alpha 1 + x^\circ$ and $y = \beta 1 + y^\circ$ with $x^\circ \in (M_{i_0}^\omega)^\circ$ and $y^\circ \in (M_j^\omega)^\circ$. By the choice of $x$ and $y$, we observe that $x^\circ \neq 0$ and $y^\circ \neq 0$. But, as above, we have $\alpha\beta1 + (\alpha y^\circ + \beta x^\circ) + x^\circ y^\circ \in M_1^\omega + \cdots + M_m^\omega$, implying that either $x^\circ=0$ or $y^\circ=0$, a contradiction. Therefore, all the other $y \in P$ fall into the $M_{i_0}^\omega$ containing the element $x$ that we initially chose. We have proved the claim. 

\medskip
Consider the von Neumann subalgebra $\bigvee_{i=1}^m M_i^\omega$ of $M^\omega$ generated by the $M_i^\omega$, and observe that $(\bigvee_{i=1}^m M_i^\omega,\varphi^\omega\!\upharpoonright_{\bigvee_{i=1}^m M_i^\omega})$ is nothing but the free product of the $(M_i^\omega,\varphi_i^\omega)$. Thus, working inside $M^\omega$ we have 
$$
Q \subset P' \cap M = P' \cap \big(\bigvee_{i=1}^m M_i\big) \subseteq P' \cap \big(\bigvee_{i=1}^m M_i^\omega\big) \subseteq M_{i_0}^\omega
$$ 
by \cite[Proposition 2.7(i)]{HoudayerUeda:MPCPS16}  (see also \cite[Proposition 3.1]{Ueda:AdvMath11}), since $P \subseteq M_{i_0}^\omega$ is diffuse and with expectation by assumption. Since the restriction of $E_{M_{i_0}^\omega}$ to $M$ is $E_{M_{i_0}}$ by definition, we have 
$$
Q \subseteq M_{i_0}^\omega \cap M = E_{M_{i_0}^\omega}(M_{i_0}^\omega \cap M) \subseteq E_{M_{i_0}^\omega}(M) = E_{M_{i_0}}(M) = M_{i_0} 
$$
in contradiction to $Q \not\preceq_M M_{i_0}$.     
\end{proof} 

One may think that the same statement as Lemma \ref{L7} holds even when $Q$ is not amenable but has sufficiently many central sequences (i.e, non-full). However, this case was already treated as case (ii) in the proof of \cite[Main Theorem]{HoudayerUeda:ComposMath16} without using any group actions. Its proof is rather different from the present one. 

\medskip
The next lemma is a key observation for the part of `state-rigidity' in Theorem \ref{T1}.  

\begin{lemma}\label{L8} Let $u \in M$ be a unitary and $1 \leq i \leq m$ be arbitrarily given. Then, if $M_i$ is diffuse and if $uM_i u^*$ is globally invariant under the action $\alpha$, then $u$ must fall into $M_i$, and hence $uM_i u^* = M_i$.
\end{lemma}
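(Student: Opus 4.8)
The plan is to reduce the statement to a statement about the $1$-cocycle $g \mapsto w_g := u^*\alpha_g(u)$ attached to $u$, and then to trivialize that cocycle using the weak mixing of $\alpha^{(i)}$. Write $i=1$ for brevity and put $\beta := \alpha^{(1)}$. Since $\alpha\!\upharpoonright_{M_1} = \beta$, each $M_1$ is $\alpha$-invariant, so $\alpha_g(uM_1u^*) = \alpha_g(u)M_1\alpha_g(u)^*$; comparing this with the hypothesis $\alpha_g(uM_1u^*) = uM_1u^*$ gives $w_g M_1 w_g^* = M_1$ for every $g \in G$, i.e.\ each $w_g$ normalizes $M_1$. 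Moreover, using $uw_h = \alpha_h(u)$, a direct computation $w_g\alpha_g(w_h) = u^*\alpha_g(uw_h) = u^*\alpha_{gh}(u) = w_{gh}$ shows that $g \mapsto w_g$ is an $\alpha$-cocycle. Thus everything is encoded in this normalizing cocycle.

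First I would upgrade ``$w_g$ normalizes $M_1$'' to ``$w_g \in M_1$''. This is where the freeness of the decomposition enters: $M_1$ is a \emph{diffuse} free component of $(M,\varphi)$, so the complement $L^2(M)\ominus L^2(M_1)$ is a coarse $M_1$-$M_1$ bimodule, the inclusion $M_1 \subseteq M$ is mixing, and its normalizer is trivial, $\mathcal{N}_M(M_1)'' = M_1$ (the same malnormality phenomenon exploited through \cite[Proposition 2.7]{HoudayerUeda:MPCPS16} in Lemma \ref{L7}). Since $w_g \in \mathcal{N}_M(M_1) \subseteq \mathcal{N}_M(M_1)''$, this yields $w_g \in M_1$, and the cocycle relation becomes the identity $\alpha_g(u) = uw_g$ in $M$.

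Now set $b := E_{M_1}(u) \in M_1$ and $x := u - b \in \mathrm{Ker}(E_{M_1})$. Applying $E_{M_1}$ to $\alpha_g(u) = uw_g$, using that $\alpha_g$ commutes with $E_{M_1}$ and that $E_{M_1}$ is $M_1$-bimodular while $w_g \in M_1$, gives $\beta_g(b) = bw_g$; consequently $\beta_g(bb^*) = (bw_g)(bw_g)^* = bb^*$, so by ergodicity of the weakly mixing action $\beta$ on $M_1$ we get $bb^* \in \mathbb{C}1$. The off-diagonal part satisfies $\alpha_g(x) = xw_g$ as well, whence $\alpha_g(xx^*) = xx^*$; since a free product of weakly mixing actions is again weakly mixing, hence ergodic, $(M^\omega)^{(\alpha,G)}$ already forces $M^{(\alpha,G)} = \mathbb{C}1$, and therefore $xx^* \in \mathbb{C}1$. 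Combined with $uu^* = 1$, these identities force $x$ to be a scalar multiple of a co-isometry lying in $\mathrm{Ker}(E_{M_1})$, and $bb^* = (1-\|x\xi_\varphi\|^2\text{-type scalar})\,1$.

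The remaining step, and the main obstacle, is to rule out a nonzero off-diagonal remainder $x$, equivalently to prove $bb^* = 1$. The idea is to feed $\alpha_g(x) = xw_g$ into the coarse bimodule $L^2(M)\ominus L^2(M_1)$: with $\zeta := x\xi_\varphi$ one has $\pi(g)\zeta = \zeta\cdot w_g$, i.e.\ the length-preserving unitary $\pi(g)$ equals right multiplication by $w_g \in M_1$ on $\zeta$. Comparing word-lengths in the free product shows that, for any $g$ with $w_g \notin \mathbb{C}1$, the top-length component of $\zeta$ must begin and end in some $M_1^\circ$; iterating and invoking the weak mixing of $\beta$ (no finite-dimensional invariant subspace in $L^2(M_1)^\circ$) one reaches a contradiction unless $\zeta = 0$, whereas if $w_g \in \mathbb{C}1$ for all $g$ then $\zeta$ is a genuine eigenvector of the weakly mixing $\alpha$, again impossible unless $\zeta = 0$. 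The delicate point throughout is the non-traciality of $\varphi$: right multiplication by a unitary of $M_1$ is \emph{not} isometric on $L^2(M,\varphi)$, so the length bookkeeping and the passage to an eigenvector must be run together with the constraint $uu^* = u^*u = 1$, which couples $b$ and $x$. Once $\zeta = 0$, i.e.\ $u = b \in M_1$ with $b$ unitary, we conclude $uM_1u^* = M_1$, as desired.
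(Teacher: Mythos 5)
Your reduction is sound and coincides with the first part of the paper's proof: the cocycle $g\mapsto u^*\alpha_g(u)$ (the mirror image of the paper's $v_g=\alpha_g(u)^*u$) normalizes $M_1$, hence lies in $M_1$ by \cite[Proposition 2.7(i)]{HoudayerUeda:MPCPS16} since $M_1$ is diffuse, and the problem reduces to showing that the off-diagonal part of $u$ relative to $E_{M_1}$ vanishes. Your intermediate observations $bb^*\in\mathbb{C}1$ and $xx^*\in\mathbb{C}1$ are correct but do not appear in the paper and do not help: together with $uu^*=1$ they only give $\lambda+\mu=1$, which is consistent with $x\neq0$.

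The genuine gap is the final step, which is the entire content of the lemma. ``Comparing word-lengths \dots the top-length component of $\zeta$ must begin and end in some $M_1^\circ$; iterating and invoking the weak mixing of $\beta$ \dots one reaches a contradiction'' is not an argument ($\zeta$ is merely an $L^2$-vector and need not have a top-length component), and it points at the wrong weak mixing. The paper's mechanism is: decompose $L^2(M)\ominus L^2(M_1)$ into the $M_1$--$M_1$ bimodules $\mathfrak{X}=L^2(M_1)\botimes L^2(M_{i(1)})^\circ\botimes\cdots\botimes L^2(M_{i(\ell)})^\circ\botimes L^2(M_1)$; each $P_\mathfrak{X}$ commutes with $\pi(g)$ and with left multiplication by $v_g\in M_1$, so $\pi(g)P_\mathfrak{X}x\xi_\varphi=v_gP_\mathfrak{X}x\xi_\varphi$; identifying $\mathfrak{X}\cong L^2(M_1)\botimes\mathfrak{Y}$ with $\mathfrak{Y}=L^2(M_{i(1)})^\circ\botimes\cdots\botimes L^2(M_{i(\ell)})^\circ\botimes L^2(M_1)$, the cocycle and $\pi_1(g)$ act only on the first tensor leg, while the restriction of $\pi(g)$ to $\mathfrak{Y}$ is weakly mixing \emph{because its leading tensor factor is $\pi_{i(1)}(g)$ on $L^2(M_{i(1)})^\circ$ with $i(1)\neq 1$}; one then approximates $P_\mathfrak{X}x\xi_\varphi$ by a finite sum of reduced words, strips off the first $M_1$-letter of each, and lets weak mixing of $\pi\!\upharpoonright_\mathfrak{Y}$ kill the pairing. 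So it is the weak mixing of the \emph{other} free components (through the representation on the stripped word space), not the absence of finite-dimensional $\beta$-invariant subspaces of $L^2(M_1)^\circ$, that does the work, and your sketch never isolates this tensor factorization. Secondarily, your choice $x=u-E_{M_1}(u)$ makes the cocycle act by \emph{right} multiplication, which on $L^2(M,\varphi)$ is not a bounded operator unless $w_g$ is $\sigma^\varphi$-analytic --- an obstruction you flag but do not resolve; the paper sidesteps it entirely by taking $x=u^*-E_{M_1}(u^*)$, so that $v_g$ acts on the left and commutes with each $P_\mathfrak{X}$.
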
  
\begin{proof} We use the same notation as in the proof of Lemma \ref{L6}. 

By assumption $\alpha_g(u)M_i\alpha_g(u)^* = uM_i u^*$ and hence $M_i = (\alpha_g(u)^*u) M_i (\alpha_g(u)^*u)^*$ for all $g \in G$.  By \cite[Proposition 2.7(i)]{HoudayerUeda:MPCPS16} we have $v_g := (\alpha_g(u)^*u) \in M_i$ for every $g \in G$. Observe that $\alpha_g\circ E_{M_i} = E_{M_i}\circ\alpha_g$ holds for every $g \in G$. Letting $x := u^* - E_{M_i}(u^*) \in \mathrm{Ker}(E_{M_i})$ we obtain that $\alpha_g(x) = v_g x$ for all $g \in G$. It suffices to prove that $x=0$. 

\medskip
Observe that $L^2(M)\ominus L^2(M_i)$ is decomposed into the direct sum of subspaces of the form 
$$
\mathfrak{X} := \big[M_i M_{i(1)}^\circ \cdots M_{i(\ell)}^\circ M_i \xi_\varphi\big] = L^2(M_i)\botimes L^2(M_{i(1)})^\circ\botimes\cdots\botimes L^2(M_{i(\ell)})^\circ\botimes L^2(M_i)
$$
with $\ell \geq 1$ and $i \neq i(1) \neq i(2) \neq \cdots \neq i(\ell) \neq i$, and thus it suffices to prove that $P_\mathfrak{X} x\xi_\varphi = 0$ for any such $\mathfrak{X}$, where $P_\mathfrak{X}$ denotes the projection onto $\mathfrak{X}$ in $L^2(M)$. We remark that $P_\mathfrak{X}$ commutes with the $v_g$ and  the $\pi(g)$.

\medskip
For an arbitrary $\varepsilon > 0$, we can find a sum $y = \sum_{k=1}^K w_k$ of words $w_k$ in $M_i M_{i(1)}^\circ \cdots M_{i(\ell)}^\circ M_i$ such that $(2\Vert x \Vert + \Vert y\xi_\varphi - P_\mathfrak{X}x\xi_\varphi\Vert_{L^2(M)}) \Vert y\xi_\varphi - P_\mathfrak{X}x\xi_\varphi\Vert_{L^2(M)} \leq \varepsilon/2$. Since 
$$
\pi(g)P_\mathfrak{X}x\xi_\varphi = P_\mathfrak{X}\pi(g)x\xi_\varphi = P_\mathfrak{X}\alpha_g(x)\xi_\varphi = P_\mathfrak{X}v_g x\xi_\varphi = v_g P_\mathfrak{X}x\xi_\varphi, 
$$
we have 
\begin{align*} 
&\big| (\pi(g)y\xi_\varphi |v_g y\xi_\varphi)_{L^2(M)} - \Vert P_\mathfrak{X} x\xi_\varphi\Vert^2_{L^2(M)}\big| \\ 
&=
\big| (\pi(g)y\xi_\varphi |v_g y\xi_\varphi)_{L^2(M)} -  (\pi(g)P_\mathfrak{X}x\xi_\varphi |v_g P_\mathfrak{X}x\xi_\varphi)_{L^2(M)}\big| \\
&\leq 
\big| (\pi(g)(y\xi_\varphi - P_\mathfrak{X}x)\xi_\varphi|v_g y\xi_\varphi)_{L^2(M)} \big| + \big|(\pi(g)P_\mathfrak{X}x\xi_\varphi | v_g(y\xi_\varphi-P_\mathfrak{X}x\xi_\varphi))_{L^2(M)} \big| \\
&\leq 
\Vert y\xi_\varphi\Vert_{L^2(M)}\cdot\Vert y\xi_\varphi - P_\mathfrak{X}x\xi_\varphi\Vert_{L^2(M)} + \Vert x\Vert\cdot\Vert y\xi_\varphi - P_\mathfrak{X}x\xi_\varphi\Vert_{L^2(M)} \\
&\leq 
\big(\Vert P_\mathfrak{X}x\xi_\varphi\Vert_{L^2(M)} + \Vert y\xi_\varphi - P_\mathfrak{X}x\xi_\varphi\Vert_{L^2(M)}\big)\Vert y\xi_\varphi - P_\mathfrak{X}x\xi_\varphi\Vert_{L^2(M)} \\
&\qquad\qquad\qquad\qquad\qquad\qquad\qquad\qquad+ 
\Vert x\Vert\cdot\Vert y\xi_\varphi - P_\mathfrak{X}x\xi_\varphi\Vert_{L^2(M)} \\
&\leq 
(2\Vert x \Vert + \Vert y\xi_\varphi - P_\mathfrak{X}x\xi_\varphi\Vert_{L^2(M)}) \Vert y\xi_\varphi - P_\mathfrak{X}x\xi_\varphi\Vert_{L^2(M)}, 
\end{align*}
and hence 
\begin{align*}
\Vert P_\mathfrak{X} x\xi_\varphi\Vert_{L^2(M)}^2 
&\leq 
\big| (\pi(g)y\xi_\varphi |v_g y\xi_\varphi)_{L^2(M)}\big| + \frac{\varepsilon}{2} \\
&\leq 
\sum_{k_1,k_2=1}^K \big| (\pi(g) w_{k_1}\xi_\varphi |v_g  w_{k_2}\xi_\varphi)_{L^2(M)}\big| + \frac{\varepsilon}{2}
\end{align*}
for every $g \in G$. For each $1 \leq k \leq K$, we write $w_k = a_k w'_k$, where $a_k \in M_i$ is the first $M_i$-letter in $w_k$ and $w'_k$ denotes the remaining word obtained by removing the first $M_i$-letter from $w_k$. Observe that $\mathfrak{X}$ is naturally identified with $L^2(M_i)\botimes \mathfrak{Y}$, where 
\begin{equation}\label{Eq4}
\mathfrak{Y} := \big[M_{i(1)}^\circ \cdots M_{i(\ell)}^\circ M_i \xi_\varphi\big] = L^2(M_{i(1)})^\circ\botimes\cdots\botimes L^2(M_{i(\ell)})^\circ\botimes L^2(M_i).
\end{equation}
This identification intertwines the restriction of $\pi$ to $\mathfrak{X}$ with the tensor product representation of $\pi_i$ and the restriction of $\pi$ to $\mathfrak{Y}$, and sends each $w_k\xi_\varphi$ to $a_k\xi_{\varphi_i}\otimes w'_k\xi_\varphi$. Observe that 
\begin{align*}
&\big|(\pi(g)w_{k_1}\xi_\varphi|v_g w_{k_2}\xi_\varphi)_{L^2(M)}\big| \\
&= 
\big|(\pi_i(g)a_{k_1}\xi_{\varphi_i}|v_g a_{k_2}\xi_{\varphi_i})_{L^2(M_i)}\big|\cdot\big|(\pi(g)w'_{k_1}\xi_\varphi|w'_{k_2}\xi_\varphi)_{L^2(M)}\big| \\
&\leq 
\big(\max_{1 \leq k \leq K} \Vert a_k \Vert\big)^2\,\big|(\pi(g)w'_{k_1}\xi_\varphi|w'_{k_2}\xi_\varphi)_{L^2(M)}\big|  
\end{align*}
for all $1 \leq k_1, k_2 \leq K$. The restriction of $\pi(g)$ to $\mathfrak{Y}$ is identified with $\pi_{i(1)}(g)\otimes\cdots\otimes\pi_{i(\ell)}(g)\otimes \pi_i(g)$ via the tensor product decomposition in \eqref{Eq4}, and thus $\pi : G \curvearrowright \mathfrak{Y}$ is weakly mixing. Hence there exists $g_\varepsilon \in G$ so that  
$$
\sum_{k_1,k_2=1}^K \big|(\pi(g_\varepsilon)w_{k_1}\xi_\varphi|v_{g_\varepsilon} w_{k_2}\xi_\varphi)_{L^2(M)}\big| \leq \frac{\varepsilon}{2}. 
$$
Consequently, we get $\Vert P_\mathfrak{X} x\xi_\varphi\Vert_{L^2(M)}^2 \leq \varepsilon$. Since $\varepsilon>0$ can arbitrarily be small, we conclude that $P_\mathfrak{X} x\xi_\varphi = 0$. Hence we are done.  
\end{proof}  

We are ready to prove Theorem \ref{T1}.

\medskip\noindent
{\it Proof of Theorem \ref{T1}.} For simplicity, we write $M_i := \lambda^M_i(R_\infty)$ ($1\leq i \leq m$) and $N_j := \lambda^N_j(R_\infty)$ ($1 \leq j \leq n$). In what follows we write $[m] := \{1,2,\dots,m\}$ and similarly $[n] := \{1,2,\dots,n\}$. 

In order to prove the theorem, we assume that there exists a bijective $*$-homomorphism $\pi : M \to N$ such that $\psi = \pi_*(\varphi)$. Hence we have $\pi\circ\sigma_t^\varphi = \sigma_t^\psi\circ\pi$, $t \in \mathbb{R}$, by confirming the so-called modular condition. Observe that $N_j$ is globally invariant under $\sigma^\psi$ and hence so is $\pi^{-1}(N_j)$ under $\sigma^\varphi$. Thanks to \cite[Theorem 4.1]{AndoHaagerup:JFA14} we have 
$$
\pi^{-1}(N_j)' \cap (M^\omega)^{(\sigma^{\varphi^\omega}, \mathbb{R})} =  (\pi^{-1}(N_j)'\cap M^\omega)^{(\sigma^{\varphi^\omega}, \mathbb{R})} \supseteq (\pi^{-1}(N_j)'\cap \pi^{-1}(N_j)^\omega)^{(\sigma^{\varphi^\omega}, \mathbb{R})}, 
$$
and the rightmost algebra is isomorphic to the asymptotic centralizer of $R_\infty$ by \cite[Proposition 4.35]{AndoHaagerup:JFA14}. Since $R_\infty$ is an amenable (or hyperfinite) type III$_1$ factor, $\pi^{-1}(N_j)' \cap (M^\omega)^{(\sigma^{\varphi^\omega}, \mathbb{R})}$ must be diffuse thanks to e.g. \cite[Proposition 1.1]{Takesaki:Dixmier60}. Moreover, it is rather trivial in this case where $\pi^{-1}(N_j)' \cap (M^\omega)^{(\sigma^{\varphi^\omega}, \mathbb{R})}$ is with expectation. Hence, Lemma \ref{L7} (with $G = \mathbb{R}$ and $\alpha^{(i)} = \sigma^{\varphi_i}$) shows that there exists a map $\kappa_N : [n] \to [m]$ such that $\pi^{-1}(N_j) \preceq_M M_{\kappa_N(j)}$ for every $j \in [n]$. Therefore, for each $j\in [n]$ there exist $d_N(j) \in \mathbb{N}$, a normal $*$-homomorphism $\Psi : \pi^{-1}(N_j) \to \mathbb{M}_{d_N(j)}(M_{\kappa_N(j)})$ and a non-zero partial isometry $v_N(j) \in \mathbb{M}_{d_N(j),1}(M)$ such that $\Psi(\pi^{-1}(N_j))$ is with expectation (see Remark \ref{R4}) and that $v_N(j)x = \Psi(x)v_N(j)$ for every $x \in \pi^{-1}(N_j)$. In particular, $v_N(j)^* v_N(j) = 1$ thanks to $\pi^{-1}(N_j)'\cap M = \mathbb{C}1$ by \cite[Proposition 2.7(i)]{HoudayerUeda:MPCPS16}. Remark that the normalizer of $\pi^{-1}(N_j)$ in $M$ generates $\pi^{-1}(N_j)$ itself due to \cite[Proposition 2.7(i)]{HoudayerUeda:MPCPS16}, and thus \cite[Proposition 2.7(ii)]{HoudayerUeda:MPCPS16} shows that $v_N(j)v_N(j)^* \in \mathbb{M}_{d_N(j)}(M_{\kappa_N(j)})$ and 
$$
v_N(j)\pi^{-1}(N_j)v_N(j)^* \subseteq v_N(j)v_N(j)^*\mathbb{M}_{d_N(j)}(M_{\kappa_N(j)})v_N(j)v_N(j)^*. 
$$
By symmetry, there also exists a map $\kappa_M : [m] \to [n]$ such that $\pi(M_i) \preceq_N N_{\kappa_M(i)}$, that is, $M_i \preceq_M \pi^{-1}(N_{\kappa_M(i)})$ for every $i \in [m]$. Moreover, for each $i \in [m]$ there exist $d_M(i) \in \mathbb{N}$, a partial isometry $v_M(i) \in \mathbb{M}_{1,d_M(i)}(M)$ with $v_M(i)^* v_M(i) =1$ such that $v_M(i)v_M(i)^* \in \mathbb{M}_{d_M(i)}(\pi^{-1}(N_{\kappa_M(i)}))$ and 
$$
v_M(i)M_i v_N(j)^* \subseteq v_M(i)v_M(i)^*\mathbb{M}_{d_M(i)}(\pi^{-1}(N_{\kappa_M(i)}))v_M(i)v_M(i)^*.
$$

With these facts we can proceed exactly in the same way of the final part of the proof of \cite[Main Theorem]{HoudayerUeda:ComposMath16}. However, the present situation allows us to give a bit simpler proof. To make this paper self-contained, we do give it here. Since the $M_i$ and the $\pi^{-1}(N_j)$ are all type III factors, we can make the $v_N(j)$ and the $v_M(i)$ unitaries in $M$. Thus, for each $i \in [m]$ we have 
$$
v_N(\kappa_M(i))(v_M(i)M_i v_M(i)^*)v_N(\kappa_M(i))^* 
\subseteq v_N(\kappa_M(i))\pi^{-1}(N_{\kappa_M(i)})v_N(\kappa_M(i))^*
\subseteq M_{\kappa_N(\kappa_M(i))}. 
$$
Hence, by \cite[Lemma 2.8]{HoudayerUeda:ComposMath16} we get $\kappa_N(\kappa_M(i)) = i$ for all $i \in [m]$. Then, for every $i\in[m]$ we also get  
$$
v_N(\kappa_M(i))v_M(i) M_i v_M(i)^* v_N(\kappa_M(i))^* \subseteq M_i, 
$$
which is actually equality because $v_N(\kappa_M(i))v_M(i)$ must fall into $M_i$ by \cite[Proposition 2.7(i)]{HoudayerUeda:MPCPS16}. Therefore, we have $$
v_N(\kappa_M(i))v_M(i)M_i v_M(i)^* v_N(\kappa_M(i))^* = v_N(\kappa_M(i))\pi^{-1}(N_{\kappa_M(i)})v_N(\kappa_M(i))^*,
$$
implying that 
$$
v_M(i)M_i v_M(i)^* = \pi^{-1}(N_{\kappa_M(i)})
$$
for all $i \in [m]$. By symmetry, we also obtain that $\kappa_M(\kappa_N(j)) = j$ for all $j \in [n]$. Therefore, $m=n$. Since $v_M(i)M_i v_M(i)^* = \pi^{-1}(N_{\kappa_M(i)})$ is globally invariant under $\sigma^\varphi$, Lemma \ref{L8} shows that $M_i = \pi^{-1}(N_{\kappa_M(i)})$; hence we obtain that $\pi(M_i) = N_{\kappa_M(i)}$ for all $i \in [m]$. Therefore, $\kappa := \kappa_M \in \mathfrak{S}_m$ is the desired permutation and $\pi_i := (\lambda^N_{\kappa(i)})^{-1}\circ(\pi\!\upharpoonright_{M_i})\circ\lambda^M_i$ is a well-defined $*$-automorphism of $R_\infty$. Moreover, we have 
\begin{align*}
(\pi_i)_*(\varphi_i) 
&= 
\varphi_i\circ\pi_i^{-1} \\
&= 
\varphi\circ\lambda_i^M\circ(\lambda^M_i)^{-1}\circ(\pi\!\upharpoonright_{M_i})^{-1}\circ\lambda^N_{\kappa(i)} \\
&= 
\varphi\circ(\pi^{-1}\!\upharpoonright_{N_{\kappa(i)}})\circ\lambda^N_{\kappa(i)} \\
&= 
((\varphi\circ\pi^{-1})\!\upharpoonright_{N_{\kappa(i)}})\circ\lambda^N_{\kappa(i)} \\
&= 
(\psi\!\upharpoonright_{N_{\kappa(i)}})\circ\lambda^N_{\kappa(i)} = \psi_{\kappa(i)}.
\end{align*}
Observe that $N_{j_1} = N_{j_2}$ implies that $N_{j_1}^\circ\xi_\psi = N_{j_2}^\circ\xi_\psi$ and hence $j_1 = j_2$, otherwise $N_{j_1}^\circ\xi_\psi \perp N_{j_2}^\circ\xi_\psi$ in $L^2(N)$ leading to a contradiction. It follows that the above $\kappa$ is uniquely determined by $\pi(M_i) = N_{\kappa(i)}$ for all $1 \leq i \leq m$. Hence we have proved the first (and main) part of Theorem \ref{T1}. 

\medskip
For the converse statement (the easier part), we assume that $m=n$ and there exist a bijection $\kappa : [m] \to [m]$ and $*$-automorphisms $\pi_i \in \mathrm{Aut}(R_\infty)$, $1 \leq i \leq m$ such that $(\pi_i)_*(\varphi_i) = \psi_{\kappa(i)}$ for every $1 \leq i \leq m$. Consider the new embedding maps $\rho_i^M := \lambda_{\kappa(i)}^N\circ\pi_i : R_\infty \to N$, $1 \leq i \leq m$. We observe that 
$$
\psi\circ\rho_i^M 
= 
\psi\circ\lambda_{\kappa(i)}^N\circ\pi_i
= 
\psi_{\kappa(i)}\circ\pi_i 
= 
(\pi_i)_*(\varphi_i)\circ\pi_i 
= 
(\varphi_i\circ\pi_i^{-1})\circ\pi_i 
= 
\varphi_i
$$ 
for every $1 \leq i \leq m$. Hence the characterization of free products based on the free independence guarantees that there exists a unique bijective $*$-homomorphism $\pi : M \to N$ such that $\pi\circ\lambda_i^M = \rho_i^M = \lambda_{\kappa(i)}^N\circ\pi_i$ holds for every $1 \leq i \leq m$. By the construction of $\pi$ we also have $\pi_*(\varphi) = \psi$. Hence we are done. \qed 

\section{Concluding Remarks}  

It is known that the unique amenable type II$_1$ factor $R$ admits various weakly mixing actions of second countable, locally compact groups such as the integers $\mathbb{Z}$. For each natural number $m \geq 2$ we consider the tracial free product type II$_1$ factor $M := R^{\star m}$, which is known to be isomorphic to the free group factor $L(\mathbb{F}_m)$ by Dykema \cite{Dykema:PacificJMath94} using Voiculescu's free probability theory (see e.g. \cite{VDN}). The proof of Theorem \ref{T1} (see also the proof of Proposition \ref{P10} below) actually shows the following: \emph{If any irreducible amenable type II$_1$ subfactor $Q \subset M$ with $Q' \cap M^\omega \not\subseteq Q^\omega$ had weakly mixing actions $\alpha^{(i)} : G \curvearrowright R$, $1 \leq i \leq m$, of a second countable, locally compact group $G$ such that $Q' \cap (M^\omega)^{(\gamma^\omega,G)}$ was diffuse with $\gamma := \bigstar_{i=1}^m \alpha^{(i)} : G \curvearrowright M$, then it would follow that $L(\mathbb{F}_{r_1}) \cong L(\mathbb{F}_{r_2})$ $\Longrightarrow$ $r_1=r_2$ for any integers $r_1,r_2 \geq 2$.} Note that the assumption that $Q' \cap M^\omega \not\subseteq Q^\omega$ above comes from Popa's spectral gap result \cite[Lemma 2]{Popa:IMRN07}. Remark also that this implication needs only Lemma \ref{L7}. This strategy to the non-isomorphism problem may not work well, but it seems natural (at least to us) to ask the following question:

\begin{question} Let $\gamma : G \curvearrowright M$ be as above. How large is $Q' \cap (M^\omega)^{(\gamma^\omega,G)}$ in $M^\omega$ for an (irreducible) amenable type II$_1$ subfactor $Q$ of $M$ provided that $Q'\cap M^\omega \not\subseteq Q^\omega$ ?
\end{question}

We are going to discuss this question in future. However, we can prove the next proposition by available techniques.

\begin{proposition}\label{P10}
Assume that $\alpha : G \curvearrowright R$ is a weakly mixing action of a countable, amenable group. Then, any $*$-automorphism of $M = R^{\star m}$ that commutes with the $m$-fold free product action $\alpha^{\star m} : G \curvearrowright M$ is obtained as the composition of a permutation over the free components and a free product of $*$-automorphisms on $R$ that commute with the given action $\alpha$.  
\end{proposition}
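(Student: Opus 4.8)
The plan is to transport the proof of Theorem~\ref{T1} almost line by line, from the type III$_1$ modular setting to the present tracial setting; the one genuinely new ingredient is the diffuseness statement that in Theorem~\ref{T1} was furnished by the Ando--Haagerup description of the $\sigma^{\varphi^\omega}$-fixed asymptotic centralizer. Write $M=\bigstar_{i=1}^m M_i$ with each $M_i\cong R$ a free component and $\alpha^{\star m}\!\upharpoonright_{M_i}=\alpha$, and let $\pi\in\mathrm{Aut}(M)$ commute with $\alpha^{\star m}$. I would first record that $Q_i:=\pi(M_i)$ is again a copy of $R$, is globally invariant under $\alpha^{\star m}$ (because $\alpha^{\star m}_g(Q_i)=\pi(\alpha^{\star m}_g(M_i))=\pi(M_i)=Q_i$), is irreducible ($Q_i'\cap M=\pi(M_i'\cap M)=\mathbb{C}1$ by \cite[Proposition~2.7(i)]{HoudayerUeda:MPCPS16}), and carries through $\pi^{-1}$ exactly the action $\alpha$; in particular $\alpha^{\star m}\!\upharpoonright_{Q_i}$ is a weakly mixing action of the amenable group $G$ conjugate to $\alpha$. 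Since $M$ is a II$_1$ factor, $\pi$ is automatically trace-preserving and each $M_i$, $Q_i$ is with expectation.

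The crux, and what I expect to be the main obstacle, is to verify the diffuseness hypothesis of Lemma~\ref{L7} for $Q=Q_i$, namely that $Q_i'\cap(M^\omega)^{(\alpha^{\star m,\omega},G)}$ is diffuse. Because there is a trace-preserving expectation onto $Q_i$, the asymptotic centralizer $Q_i'\cap Q_i^\omega$ embeds into $Q_i'\cap M^\omega$, and its $\alpha^\omega$-fixed part lies in $Q_i'\cap(M^\omega)^{(\alpha^{\star m,\omega},G)}$. So it suffices to show that for a weakly mixing (hence properly outer) action $\alpha$ of a countable amenable group $G$ on $R$, the fixed-point algebra $(R_\omega)^{\alpha_\omega}$ of the induced action on the asymptotic centralizer $R_\omega=R'\cap R^\omega$ is diffuse. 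This is exactly where amenability of $G$ is indispensable: by the Rokhlin-type theory of actions of discrete amenable groups on $R$ due to Ocneanu, the induced action on $R_\omega$ admits approximately invariant Rokhlin towers, and twisting a diffuse element of a corner across such a tower yields a diffuse $\alpha_\omega$-fixed unitary. I would stress that a naive F\o{}lner averaging does \emph{not} work here, since averaging a fixed projection over a long F\o{}lner set drives its variance to $0$ and collapses it to a scalar; it is the tower construction that preserves diffuseness. Amenability cannot be dropped: for non-amenable $G$ a weakly mixing action can be strongly ergodic on central sequences, forcing $(R_\omega)^{\alpha_\omega}=\mathbb{C}1$, which is precisely why the analogous non-isomorphism strategy in the preceding remark remains open.

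Granting this, Lemma~\ref{L7} (applied with each $\alpha^{(i)}$ equal to the given $\alpha$, so that the lemma's ambient free product action is $\alpha^{\star m}$) gives $Q_i\preceq_M M_{\kappa(i)}$ for some index $\kappa(i)$, and the same argument applied to $\pi^{-1}$ produces the reverse family of intertwinings. From here I would proceed exactly as in the final part of the proof of Theorem~\ref{T1}: combining the two families through \cite[Lemma~2.8]{HoudayerUeda:ComposMath16} forces the two index maps to be mutually inverse, so $\kappa\in\mathfrak{S}_m$ is a permutation. Using that the $M_i$ and $Q_i$ are II$_1$ subfactors with trivial relative commutant, the intertwining partial isometries can be upgraded to a genuine unitary $u\in M$ with $uM_iu^*=\pi^{-1}(M_{\kappa(i)})$, the II$_1$ factoriality of the components playing the role of the type III$_1$ factoriality in Theorem~\ref{T1}.

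Finally, since $\pi^{-1}(M_{\kappa(i)})=uM_iu^*$ is globally $\alpha^{\star m}$-invariant and $M_i\cong R$ is diffuse, Lemma~\ref{L8} forces $u\in M_i$, whence $\pi^{-1}(M_{\kappa(i)})=M_i$, i.e. $\pi(M_i)=M_{\kappa(i)}$ for every $i$. Thus $\pi$ permutes the free components by $\kappa$, and the induced map $\pi_i\in\mathrm{Aut}(R)$ obtained from $\pi\!\upharpoonright_{M_i}\colon M_i\to M_{\kappa(i)}$ under the identifications $M_i\cong R\cong M_{\kappa(i)}$ is well defined; because $\pi$ intertwines $\alpha^{\star m}$ and each component action is $\alpha$, each $\pi_i$ commutes with $\alpha$. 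Since $\pi$ is trace-preserving and agrees with the composition of $\bigstar_{i=1}^m\pi_i$ with the permutation $\kappa$ on the generating set $\bigcup_i M_i$, the universal property of the tracial free product identifies $\pi$ with that composition, giving the asserted description.
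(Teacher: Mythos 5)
Your proposal is correct and follows essentially the same route as the paper: the diffuseness of the $G$-fixed asymptotic centralizer that you derive from Ocneanu's Rokhlin-tower theory is exactly the fact the paper cites as \cite[Lemma 8.3]{Ocneanu:LNM85} (namely that $(R_\omega)^{(\alpha^\omega,G)}$ is of type II$_1$ for an action of a discrete amenable group), and the remainder --- Lemma \ref{L7}, \cite[Lemma 2.8]{HoudayerUeda:ComposMath16} to see that $\kappa$ is a permutation, upgrading the intertwiners to unitaries via II$_1$ factoriality, and Lemma \ref{L8} --- is precisely the paper's argument. The only cosmetic difference is that you apply the intertwining to $\pi(M_i)$ where the paper uses $\beta^{-1}(M_i)$, which changes nothing.
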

\begin{proof} Remark first that $R'\cap (R^\omega)^{(\alpha^\omega,G)} = (R_\omega)^{(\alpha^\omega,G)}$ is well known to be of type II$_1$ \cite[Lemma 8.3]{Ocneanu:LNM85}. Denote by $\beta$ such a $*$-automorphism of $M$, and also by $M_i$ the $i$th free component (isomorphic to $R$). In the same way as in the proof of Theorem \ref{T1} we can prove that there exist a permutation $\kappa \in \mathfrak{S}_m$, partial isometries $v_i \in \mathbb{M}_{d(i),1}(M)$ with $d(i) \in \mathbb{N}$ so that $v_i^* v_i = 1$ in $M_{\kappa(i)}$ (a corner of $\mathbb{M}_{d(i)}(M)$), $v_i v_i^* \in \mathbb{M}_{d(i)}(\beta^{-1}(M_i))$ and $v_i M_{\kappa(i)} v_i^* = v_i v_i^* \mathbb{M}_{d(i)}(\beta^{-1}(M_i)) v_i v_i^*$ for every $1 \leq i \leq m$. Since $M_{\kappa(i)} \cong \beta^{-1}(M_i)$ is a type II$_1$ factor, it is plain to select a unitary $u_i \in M$ in such a way that $u_i M_{\kappa(i)} u_i^* = \beta^{-1}(M_i)$ holds. Then Lemma \ref{L8} implies that $M_{\kappa(i)} = u_i M_{\kappa(i)} u_i^* = \beta^{-1}(M_i)$ holds, and the desired assertion immediately follows.       
\end{proof}

In closing, we point out that Theorem \ref{T1} as well as Proposition \ref{P10} hold under a variety of other assumptions (with allowing infinite index sets in some instances) thanks to \cite[Main Theorem]{HoudayerUeda:ComposMath16}. The details about such generalizations are left to the reader. 

\section*{Acknowledgement} 

Part of the preparation of this manuscript was done during my participation in the Hausdorff Trimester Program `Von Neumann Algebras', and we are grateful for the financial support and the hospitality of the Hausdorff Research Institute for Mathematics in Bonn. We also thank the referee for useful comments and pointing out typos. This work was supported in part by my previous Grant-in-Aid for Scientific Research (C) JP24540214 as well as my on-going Grant-in-Aid for Challenging Exploratory Research JP16K13762.

\end{document}